\newcommand{\beq}{\begin{equation}}
\newcommand{\eeq}{\end{equation}}
\newcommand{\ben}{\begin{eqnarray}}
\newcommand{\een}{\end{eqnarray}}
\newcommand{\beno}{\begin{eqnarray*}}
\newcommand{\eeno}{\end{eqnarray*}}
\newcommand{\R}{\mathbb{R}}
\newtheorem{thm}{Theorem}[section]
\newtheorem{defi}[thm]{Definition}
\newtheorem{lem}[thm]{Lemma}
\newtheorem{prop}[thm]{Proposition}
\newtheorem{coro}[thm]{Corollary}
\def\m{\mu}
\def\R{\mathbb{R}}
\def\N{\mathbb{N}}
\def\H{\mathcal{H}}
\def\e{\epsilon}
\def\vare{\varepsilon}
\def\D{\Delta}
\def\O{\Omega}
\def\a{\alpha}
\def\g{\gamma}
\def\m{\mu}
\def\de{\partial}
\def\k{\kappa}
\newcommand{\chrc}[1]{\mathrm{1}_{#1}}
\def\Xint#1{\mathchoice
{\XXint\displaystyle\textstyle{#1}}%
{\XXint\textstyle\scriptstyle{#1}}%
{\XXint\scriptstyle\scriptscriptstyle{#1}}%
{\XXint\scriptscriptstyle\scriptscriptstyle{#1}}%
\!\int}
\def\XXint#1#2#3{{\setbox0=\hbox{$#1{#2#3}{\int}$ }
\vcenter{\hbox{$#2#3$ }}\kern-.6\wd0}}
\def\dashint{\Xint-}
\title[Stable solutions to the one-phase free boundary problem]{Nondegeneracy for stable solutions to the one-phase free boundary problem}
\author[N. Kamburov]{Nikola Kamburov$^\ast$}
\address{$^\ast$Facultad de Matem\'aticas \\ Pontificia Universidad Cat\'olica de Chile \\ Avenida Vicu\~na Mackenna 4860, Santiago 7820436, Chile}
\email{nikamburov@mat.uc.cl}
\author[K. Wang]{Kelei Wang$^\dag$}
\address{$^\dag$School of Mathematics and Statistics \\ Wuhan University\\
	Wuhan 430072, China}
\email{wangkelei@whu.edu.cn}
\thanks{ N. Kamburov was partially supported by Proyecto Fondecyt Regular No.\ 1201087. K. Wang  was supported by the National Natural Science Foundation of China (No.~11871381 and No. 12131017).}
\begin{document}

\begin{abstract}
We prove  the nondegeneracy condition for stable solutions to the one-phase free boundary problem. The proof is by a De Giorgi iteration, where we need  the Sobolev inequality of Michael and Simon  and, consequently, an integral estimate for the mean curvature of the free boundary. 
We then apply the nondegeneracy estimate to obtain local curvature bounds for stable free boundaries in dimension $n$, provided the Bernstein-type theorem for stable, entire solutions in the same dimension is valid. In particular, we obtain this curvature estimate in $n=2$ dimensions. 
\end{abstract}

\keywords{one-phase free boundary problem; stable solutions.}
\subjclass[2020]{35R35; 35B36.}

\bibliographystyle{plain}

\maketitle



\section{Introduction}

In this article we study stable solutions to the one-phase free boundary problem
 \begin{equation}\label{eqn}
\left\{
\begin{aligned}
 u\geq 0 & \quad \mbox{in } D, \\
   \Delta  u=0 & \quad \mbox{in } D^+(u):=\{x \in D: u(x)>0\}, \\
   |\nabla u|=1 & \quad \mbox{on }  F(u):=\de D^+(u) \cap D.
  \end{aligned}\right.
  \end{equation}
Here, $D\subset \R^n$ is a domain, $D^+(u)$ denotes the \emph{positive phase} of $u$  in $D$ and $F(u)$ is its \emph{free boundary}. We will be mainly interested in \emph{classical} solutions, that is: $u$ is continuous in $\overline{D}$, the free boundary $F(u)$ is a $C^\infty$ hypersurface with $u=0$ on one side, $u>0$ on the other, and $u\in C^\infty(\overline{D^+(u)})$ satisfies the gradient condition $|\nabla u| = 1$ in pointwise sense, evaluated from the positive side. 

The one-phase free boundary problem (FBP) arises as the Euler-Lagrange equations for the energy functional
\begin{equation}\label{functional}
    J(v,D) = \int_{D} (|\nabla v|^2 + 1_{\{v>0\}}) \, dx,  \qquad
    v :D \to[0,\infty),
\end{equation}
and appears in various interface models in fluid mechanics and materials science. There is vast literature on it -- see for example  the books \cite{CafSalsa} and \cite{FriedBook}. A seminal role in its study is played by the 1981 paper \cite{Alt-Caffarelli} of Alt and Caffarelli on minimizers of $J$, in which the authors pioneered the use of blow-up limits to investigate the regularity of the free boundary. In the context of the one-phase FBP, the blow-up analysis is rooted in two basic estimates. For the sake of simplicity, we shall state them for solutions of \eqref{eqn} defined in the unit ball $D=B_1(0)$. The first fundamental estimate is the uniform Lipschitz bound that \emph{any} solution to \eqref{eqn} in $B_1$ satisfies: if $0\in F(u)$, then
\begin{equation}\label{eq:Lip}
|\nabla u(x)| \leq C \quad \text{for all } x\in B_{1/2}^+(u),
\end{equation}
for some positive constant $C$ that depends only on the dimension $n$. The second one is the uniform \emph{nondegeneracy} bound:
\begin{equation}\label{eq:nondegintro}
\dashint_{\de B_r(p)} u \, d\H^{n-1} \geq c r \quad \text{for all } p\in F(u)\cap B_{1/4} \quad \text{and all } r\in (0,1/4),  
\end{equation}
for some dimensional constant $c>0$. Though valid for one-phase energy minimizers (\cite{Alt-Caffarelli}), the nondegeneracy condition \eqref{eq:nondegintro} does not hold for all solutions, as exhibited by the  family of solutions $\{u_\vare\}_{\vare\in (0,1)}$ in $B_1$,
\[u_\varepsilon(x)=
\begin{cases}
	\varepsilon \left(\log\frac{|x|}{\varepsilon}\right)^+, & \text{if} ~ n=2,\\
	 \frac{\varepsilon}{n-2} \left[1-\left(\frac{|x|}{\varepsilon}\right)^{2-n}\right]^+, & \text{if} ~ n\geq 3, 
\end{cases}\]
when $\vare>0$ is small enough.

In their paper \cite{Alt-Caffarelli} Alt and Caffarelli used a convenient measure-theoretic reformulation of \eqref{eq:Lip} and \eqref{eq:nondegintro}. Denote
\[
\mu:=\mathcal{H}^{n-1}\lfloor_{F(u)}.
\]
The Lipschitz bound \eqref{eq:Lip} means that for some dimensional constant $C>0$,
\begin{equation}\label{upper bound}
	\mu(B_r(x))\leq Cr^{n-1} \quad \text{for all } x\in F(u)\cap B_{1/4}, \quad\text{and all } r\in(0,1/4).
\end{equation}
This upper measure estimate can be easily deduced by applying the Divergence Theorem to $\D u$ in $B_1^+(u)\cap B_r(x)$, and using \eqref{eq:Lip} as well as the free boundary condition, $|\nabla u| = 1$ on $F$. In turn,  the lower estimate
\begin{equation}\label{lower bound}
	\mu(B_r(x))\geq c r^{n-1} \quad \text{for all } x\in F(u)\cap B_{1/4}, \quad\text{and all } r\in(0,1/4),
\end{equation}
where $c>0$ is a dimensional constant, corresponds to the nondegeneracy condition \eqref{eq:nondegintro}. The equivalence of the bounds \eqref{eq:Lip}-\eqref{eq:nondegintro} to their measure-theoretic counterparts \eqref{upper bound}-\eqref{lower bound} is the statement of \cite[Theorem 4.3]{Alt-Caffarelli}.

We will be interested in solutions $u$ of \eqref{eqn} which are \emph{stable} critical points of the functional $J$ with respect to compactly supported domain deformations:
\[
\left.\frac{d^2}{dt^2}\right|_{t=0} J (u(x+t \Phi(x)), D) \geq 0 \quad \text{for all vector fields } \Phi\in C^\infty_0(D,\R^n).
\]
For classical solutions, this condition takes the form of the so called \mbox{\emph{stability inequality}} (see \cite{CJK} for its derivation):
\begin{equation}\label{stability}
	\int_{F(u)} H\phi^2 \, d\m \leq \int_{\O}|\nabla\phi|^2 \, dx, \quad \text{for all } \phi\in C^{\infty}_0(D),
\end{equation}
where $H$ denotes the \emph{mean curvature} of the free boundary with respect to the inner unit normal vector to the positive phase $\O:=D^+(u)$. 
The principal goal of this paper is to show that the nondegeneracy condition \eqref{lower bound} (equivalently, \eqref{eq:nondegintro}) holds, more generally, for stable solutions.

\begin{thm}\label{thm nondegeneracy}
  There exists a  constant $\varepsilon(n)$  such that if $u$ is a stable classical solution of \eqref{eqn} in $B_1$, then for any $x\in F(u)\cap B_{1/4}$ and $r\in(0,1/4)$,
\[\mu(B_r(x))\geq \varepsilon(n) r^{n-1}.\]
\end{thm}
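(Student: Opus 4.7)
Plan. I would prove Theorem~\ref{thm nondegeneracy} by contradiction: suppose there is a stable classical solution $u$ in $B_1$, a free boundary point $x_0 \in F(u) \cap B_{1/4}$, and a radius $r \in (0, 1/4)$ with $\mu(B_r(x_0)) < \varepsilon r^{n-1}$, where $\varepsilon = \varepsilon(n)$ is a small constant to be pinned down at the end. The strategy is precisely the three-ingredient recipe announced in the abstract: an integral mean-curvature estimate from stability, the Michael--Simon Sobolev inequality on $F(u)$, and a De Giorgi iteration.

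First, I would insert a standard cutoff $\phi \in C_c^\infty(B_{2r}(x_0))$ with $\phi \equiv 1$ on $B_r(x_0)$ and $|\nabla \phi| \leq 2/r$ into the stability inequality \eqref{stability}, obtaining the scale-invariant bound
\[
\int_{F(u) \cap B_r(x_0)} H \, d\mu \,\leq\, C(n)\, r^{n-2}.
\]
To upgrade this one-sided bound to one on $\int |H|\, d\mu$, I would exploit the boundary identity $\partial_\nu(|\nabla u|^2) = -2H$ on $F(u)$ (which follows from $\Delta u=0$ in $\Omega := D^+(u)$, together with $u=0$ and $|\nabla u|=1$ on $F(u)$), combined with the subharmonicity of $|\nabla u|^2$ in $\Omega$ and a Hopf-type argument to control the negative part of $H$, or else to absorb it directly in the subsequent estimate.

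Next, the Michael--Simon Sobolev inequality on $F(u)$,
\[
\left( \int_F |f|^{(n-1)/(n-2)} \, d\mu \right)^{(n-2)/(n-1)} \,\leq\, C(n) \int_F \big( |\nabla_F f| + |H|\, |f| \big) \, d\mu,
\]
applied to suitable geometric cutoffs of $F$, converts the $L^1$ bound on $H$ into an effective isoperimetric-type inequality on $F(u) \cap B_r(x_0)$. Writing $\theta(s) := \mu(B_s(x_0))/s^{n-1}$, selecting at each step by a coarea argument an intermediate radius $\rho_k$ where $\mathcal{H}^{n-2}(F \cap \partial B_{\rho_k}(x_0)) \lesssim \mu(B_{r_k}(x_0))/r_k$, and iterating between dyadic scales $r_k = 2^{-k} r$, I would obtain a recursive estimate of the form
\[
\theta(r_{k+1}) \,\leq\, C(n)\, \theta(r_k)^{1+\gamma}
\]
with some dimensional $\gamma > 0$. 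If $\theta(r_0) < \varepsilon(n)$ is small enough, the iteration drives $\theta(r_k) \to 0$, contradicting the elementary density $\theta(r_k) \to c_n > 0$ at any smooth hypersurface point.

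The hard part, I anticipate, is extracting the super-linear exponent $1+\gamma$ in the recursion: both the stability bound $\int_{F\cap B_r} H \, d\mu \lesssim r^{n-2}$ and the target $\mu(B_r) \gtrsim r^{n-1}$ are scale invariant, so the whole gain has to be squeezed out of the Michael--Simon Sobolev exponent $(n-1)/(n-2)$. Balancing the gradient term against the $|H|$-term in Michael--Simon, choosing cutoffs so that the coarea cross-section contributions are subcritical, and ensuring $\varepsilon(n)$ can be selected uniformly across the iteration are where the technical work lies.
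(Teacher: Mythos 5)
Your proposal correctly identifies the three high-level ingredients the paper uses -- a mean-curvature estimate derived from stability, the Michael--Simon Sobolev inequality, and a De~Giorgi iteration -- but there is a genuine gap in the crucial first step, and you yourself put your finger on it at the end without resolving it.

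The problem is that plugging a standard cutoff $\phi$ into the stability inequality \eqref{stability} gives only the \emph{scale-invariant, $O(1)$} bound $\int_{F\cap B_r} H\,d\mu \leq \int_\Omega |\nabla\phi|^2\,dx \lesssim r^{n-2}$, with no smallness in terms of $\varepsilon := \mu(B_r)/r^{n-1}$. When you then feed this into the Michael--Simon inequality
\[
\Big(\int_F \phi^{\frac{n-1}{n-2}}\,d\mu\Big)^{\frac{n-2}{n-1}} \leq C\int_F \big(|\nabla_F\phi| + |H|\phi\big)\,d\mu,
\]
the term $\int_{F\cap B_r}|H|\,d\mu \sim r^{n-2}$ on the right dominates the gradient term $\sim \mu(B_r)/r \sim \varepsilon\,r^{n-2}$, and after normalizing you get $\theta(r/2)^{(n-2)/(n-1)} \lesssim \theta(r) + 1$; the $+1$ kills the iteration, and no choice of intermediate radii, coarea slicing, or Sobolev exponent juggling recovers it. The superlinear recursion $\theta(r_{k+1}) \lesssim \theta(r_k)^{1+\gamma}$ you are aiming for requires the $|H|$-term in Michael--Simon to itself carry a factor of $\varepsilon$ (or smaller), and that is exactly what a naive cutoff, or even the more standard test function $|\nabla u|\eta$ (for which $|\nabla u| = O(1)$), cannot provide. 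The same obstruction also undermines your planned Hopf-lemma/subharmonicity treatment of $H^-$: without a quantitative smallness input, controlling the negative part of $H$ only reproduces an $O(1)$ bound.

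What the paper does, and what your plan is missing, is the decomposition $u = h - v$, where $h$ is the harmonic extension of $u|_{\partial B_1}$ and $-\Delta v = \mu$ with $v = 0$ on $\partial B_1$. The Newtonian potential estimate $\sup v \lesssim \varepsilon^{1/(n-1)}$ gives $\int_{B_1}|\nabla v|^2 \lesssim \varepsilon^{n/(n-1)}$, while Harnack plus interior estimates force $|\nabla h|, |\nabla^2 h|$ to be small on $F$; consequently $|\nabla v|\approx 1$ on $F\cap B_{1-r}$ but its Dirichlet energy is tiny. Plugging $|\nabla v|\eta$ (not $\phi$ and not $|\nabla u|\eta$) into the stability inequality, together with Lemma~\ref{lem:harmonic} and a second use of stability, is what produces the superlinear mean-curvature bound $\int_{F\cap B_{1-r}}|H|\,d\mu \leq C_1 r^{-2}\varepsilon^{n/(n-1)}$ of Lemma~\ref{lem estimate for mean curvature}. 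The negative part $H^-$ is then controlled by a Chebyshev estimate on $|\{|\nabla v| > 1/2\}|$, again crucially using the smallness of $\int|\nabla v|^2$. Without this decomposition and this particular test function, the De~Giorgi iteration has nothing to iterate on. (A minor additional point: the paper's mean-curvature lemma only works for $n\geq 3$ because Michael--Simon and the Newtonian-potential argument need it; the case $n=2$ is handled by adding a dummy variable.)
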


Alt and Caffarelli \cite{Alt-Caffarelli} showed that the nondegeneracy condition \eqref{eq:nondegintro} is valid for energy minimizing solutions to the one-phase FBP. Soon thereafter, it was also established for energy minimizers within the theory of two-phase FBP \cite{ACF}.  More recently, non-degeneracy bounds have underlied the studies of the regularity theory for energy minimizing solutions to the ``thin" one-phase FBPs (\cite{allen2012two,caffarelli2010variational,  de2012regularity, de2015regularity}); for ``almost minimizers" of the one-phase functional $J$  (see \cite{ david2019free,david2015regularity, de2020almost}), as well as for energy minimizing solutions to a vectorial analogue of \eqref{eqn} (see \cite{caffarelli2018minimization, mazzoleni2017regularity, mazzoleni2020regularity}). In all of these, the estimate is obtained via energy comparison methods. In works on free boundary solutions that are \emph{higher} critical points of their underlying energy functionals, the nondegeneracy bound has been achieved either by viewing the solution as a constrained minimizer, as in \cite{Jerison-Perera, liu2021smooth}, and employing energy comparisons again, or by utilizing certain given topological constraints on the free boundary (\cite{JK16}).

None of these methods are available in our setting. We obtain the result of Theorem \ref{thm nondegeneracy} by performing a De Giorgi iteration that takes place on the free boundary surface itself. The stability inequality is used to obtain an $L^1$ estimate of the mean curvature $H$ in terms of the area of the free boundary, contained in a slightly larger scale (Lemma \ref{lem estimate for mean curvature}). We then employ the Sobolev inequality of Michael and Simon \cite{M-Simon} to bound the area in a smaller scale and close the iteration loop. The proof of the key mean curvature estimate, Lemma \ref{lem estimate for mean curvature}, is delicate and involves plugging in the stability inequality a novel test function based on the gradient of $v=h-u$, where $h$ is the harmonic replacement of the solution $u$ in $B_1$ -- rather than the more commonly used, natural test functions based on $|\nabla u|$. 

The fact that stable solutions enjoy the nondegeneracy estimate opens up the toolbox of blow-up techniques, with which to pry the geometry of the free boundary. The second result of our paper involves obtaining interior curvature bounds for stable one-phase free boundaries, \emph{provided that} the global problem in the same dimension is rigid. More precisely, we show that if one knows that the \emph{Bernstein type} statement
\begin{equation}\label{eq:FBBern}
\text{if } U \text{ is a classical stable solution of \eqref{eqn} in } \R^n \quad \Longrightarrow \quad F(U) \text{ is flat} 
\end{equation}
holds in some dimension $n$, then stable free boundaries in $B_1\subset \R^n$ have uniformly bounded curvature on a smaller scale. This is expressed in the following theorem. 

\begin{thm}\label{thm:bernstab}
Assume that \eqref{eq:FBBern} is true for some $n\geq 2$, and let $u$ be a stable classical solution to the one-phase FBP \eqref{eqn} in $B_1\subset \R^n$, with $0\in F(u)$. Then there exists a constant $C=C(n)$ such that the second fundamental form $A$ of the free boundary $F(u)$ satisfies
\begin{equation}\label{thm:bernstab:main}
|A(p)| \text{dist}(p, \de B_{1/4}) \leq C \quad \text{for all } p\in F(u)\cap B_{1/4}. 
\end{equation}
In particular, the curvature $|A|$ of $F(u)\cap B_{1/8}$ is bounded by a dimensional constant. 
\end{thm}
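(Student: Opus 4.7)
The plan is a blow-up argument by contradiction. Suppose \eqref{thm:bernstab:main} fails for every dimensional constant: then there exist a sequence of stable classical solutions $u_k$ of \eqref{eqn} in $B_1$ and points $\tilde p_k\in F(u_k)\cap B_{1/4}$ with $|A_k(\tilde p_k)|\,\text{dist}(\tilde p_k,\de B_{1/4})\to\infty$, where $A_k$ denotes the second fundamental form of $F(u_k)$. A standard point-picking step (choose $p_k$ maximizing $p\mapsto|A_k(p)|\,\text{dist}(p,\de B_{1/4})$ on the compact set $F(u_k)\cap \overline{B_{1/4}}$) upgrades this to a sequence with $\lambda_k:=|A_k(p_k)|\to\infty$, $\rho_k:=\text{dist}(p_k,\de B_{1/4})$, and the quantitative control $|A_k(q)|\le 2\lambda_k$ for every $q\in F(u_k)\cap B_{\rho_k/2}(p_k)$.

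Next I zoom in by setting $\tilde u_k(x):=\lambda_k u_k(p_k+x/\lambda_k)$. Each $\tilde u_k$ is a stable classical solution of \eqref{eqn} on the ball $B_{R_k}(0)$, where $R_k:=\lambda_k\rho_k/2\to\infty$; the scale invariance of \eqref{eqn} and of the stability inequality gives $0\in F(\tilde u_k)$, $|\tilde A_k(0)|=1$, and $|\tilde A_k|\le 2$ on $F(\tilde u_k)\cap B_{R_k}$. The Lipschitz bound \eqref{eq:Lip} supplies $|\nabla \tilde u_k|\le C$ uniformly on compacts, while Theorem \ref{thm nondegeneracy} provides scale-invariant nondegeneracy (hence prevents the positive phase from collapsing). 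Combined with the uniform curvature bound on $F(\tilde u_k)$, these ingredients let me extract a subsequence along which $\tilde u_k$ converges locally uniformly to a limit $U$, with $F(\tilde u_k)\to F(U)$ smoothly on compacts. Then $U$ is a classical solution of \eqref{eqn} in all of $\R^n$ with $|A_U(0)|=1$, so $F(U)$ is not a hyperplane; stability of $U$ follows by passing to the limit in \eqref{stability} against any fixed compactly supported test function. This contradicts the assumed rigidity \eqref{eq:FBBern} and proves \eqref{thm:bernstab:main}; the final ``in particular'' claim then follows since $\text{dist}(p,\de B_{1/4})\ge 1/8$ for $p\in F(u)\cap B_{1/8}$.

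The main technical obstacle is the compactness step, namely the justification of smooth convergence of the free boundaries $F(\tilde u_k)$. The Lipschitz estimate together with Theorem \ref{thm nondegeneracy} gives Hausdorff convergence of $F(\tilde u_k)$ to $F(U)$ on compact sets and rules out the trivial limit $U\equiv 0$. The uniform bound on $|\tilde A_k|$ then allows me to write each $F(\tilde u_k)$ locally as the graph of a function with $C^{1,\alpha}$ bounds independent of $k$; standard higher regularity for the one-phase FBP (bootstrap from the free boundary condition together with elliptic regularity for the harmonic phase) promotes this to $C^\infty$ convergence. Once this smooth convergence is secured, the classicality and stability of the limit $U$ are immediate, and the contradiction with \eqref{eq:FBBern} closes.
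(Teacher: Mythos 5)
Your overall strategy coincides with the paper's: blow up at a point maximizing $|A_k(p)|\,\text{dist}(p,\partial B_{1/4})$, use the Lipschitz bound together with Theorem~\ref{thm nondegeneracy} to extract a limit, pass stability to the limit, and contradict \eqref{eq:FBBern}. However, there is a genuine gap in the step where you conclude that the limit $U$ is an \emph{entire classical} solution. Smooth local graph convergence of the individual free boundary sheets does not by itself rule out the degenerate scenario in which two distinct connected components of $\{\tilde u_k>0\}$ collapse onto each other in the limit, so that near some $q\in F(U)$ the set $\{U>0\}$ lies on \emph{both} sides of a hypersurface through $q$. (Think of a shrinking slab $\{0<x_n<a_k\}$ with $a_k\to 0$, which produces $U(x)=|x_n|$ — a viscosity solution, but not a classical one in the sense required, since $U>0$ on both sides of $\{x_n=0\}$.) If this occurs, $U$ is not a classical solution, the stability inequality \eqref{stability} does not directly apply, and $|A_U(0)|$ is not even well-defined, so the rigidity statement \eqref{eq:FBBern} cannot be invoked. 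Your phrase ``classicality of the limit $U$ is immediate'' skips precisely this obstruction.

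The paper spends the bulk of the proof closing exactly this hole: it shows that two components $\mathcal{C}$ and $\tilde{\mathcal{C}}$ of the positive phase cannot touch at a free boundary point. The key ingredients are Proposition~\ref{prop:gradient} (global viscosity solutions have $|\nabla U|\le 1$, with equality forcing linearity), which shows that at least one of the two touching pieces has $|\nabla U_\pm|<1$ strictly; the Hopf lemma, which then forces a \emph{strictly positive} mean curvature on that piece; and the elementary geometric comparison $-H_-(q)\ge H_+(q)\ge 0$ for two hypersurfaces tangent from opposite sides, which is incompatible with $H_->0$ and $H_+\ge 0$. You would need to supply this argument (or an equivalent one) before you can invoke \eqref{eq:FBBern}. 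Once that is in place, your remaining steps — passing to the limit in \eqref{stability} via the $L^1_{\text{loc}}$-convergence of $\chrc{\{\tilde u_k>0\}}$ and the smooth convergence of the free boundary, preserving $|A(0)|=1$, and deducing the ``in particular'' statement from $\text{dist}(p,\partial B_{1/4})\ge 1/8$ — are all correct and match the paper.
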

We remark that the free boundary curvature bound \eqref{thm:bernstab:main} means that for some dimensional constant $c>0$, the connected component of $B_c^+(u)$, whose boundary contains the origin, is the supergraph of a function $f$ with bounded $C^2$-norm. By the classical regularity theory for the one-phase FBP \cite{KindNiren}, one then gets higher order derivative estimates for $f$.  

Theorem \ref{thm:bernstab} is the free boundary analogue of a well known relation in minimal surface theory: between Bernstein-type results for complete, stable minimal hypersurfaces on the one hand, and local curvature estimates for stable minimal hypersurfaces  on the other (see \cite[Chapter 3]{white2013lectures}). As such, its proof employs a similar compactness argument, which in our setting is made possible because of the non-degeneracy estimate of Theorem \ref{thm nondegeneracy}.

The problem of finding the dimensions $n$, for which the rigidity statement \eqref{eq:FBBern} is true, is open and akin to the so called ``Stable Bernstein Problem" for minimal hypersurfaces in $\R^n$ (see \cite{chodosh2021stable}).  The recent construction by  De Silva, Jerison and Shahgholian \cite{de2022inhomogeneous} of entire, energy minimizing, classical solutions of \eqref{eqn} that are asymptotic to the (non-flat) energy minimizing cone of De Silva-Jerison \cite{DeSilvaJerison} in $\R^7$, implies that $n\leq 6$. Note that the stable Bernstein problem  is a little different from the energy minimizing case (see Caffarelli-Jerison-Kenig \cite{CJK} and Jerison-Savin \cite{Jerison-Savin}), because now the blowing down limit of entire solutions could be the wedge solution $|x_n|$.  Entire solutions with such an asymptotic behavior have been constructed in Hauswirth-H\'{e}lein-Pacard \cite{HHP} (in dimension $2$) and Liu-Wang-Wei \cite{liu2021smooth} (in higher dimensions), whose free boundaries are of catenoid type. Although these known examples are unstable, it is not clear in general how to exclude such a possibility by only using the stability condition.

Here we show the veracity of \eqref{eq:FBBern} in dimension $n=2$ (see Theorem \ref{prop:stab} of Section \ref{sec:proofstab}), using the logarithmic cut-off trick. As a corollary, stable free boundaries, defined in disks of $\R^2$, enjoy interior curvature estimates. 

\begin{coro}\label{coro:stab} Let $u$ be a stable classical solution to the one-phase FBP \eqref{eqn} in $B_1\subset \R^2$, and assume $0\in F(u)$. Then there exists an absolute constant $C$ such that the curvature $\k$ of the free boundary $F(u)$ satisfies
\begin{equation}\label{thm:stab:main}
|\k(p)| \text{dist}(p, \de B_{1/4}) \leq C \quad \text{for all } p\in F(u)\cap B_{1/4}. 
\end{equation}
In particular, the curvature $\k$ of $F(u)\cap B_{1/8}$ is bounded by an absolute constant. 
\end{coro}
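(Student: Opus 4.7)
My plan is to deduce Corollary \ref{coro:stab} directly from Theorem \ref{thm:bernstab} once the Bernstein-type hypothesis \eqref{eq:FBBern} is verified in dimension $n=2$. In $\R^2$ the free boundary is a collection of one-dimensional curves whose second fundamental form reduces to the scalar curvature $\k$, so the conclusion $|A(p)|\,\text{dist}(p,\de B_{1/4})\leq C$ of Theorem \ref{thm:bernstab} specializes immediately to \eqref{thm:stab:main}; since $\text{dist}(p,\de B_{1/4})\geq 1/8$ on $F(u)\cap B_{1/8}$, one then reads off the absolute curvature bound there. Thus, modulo the $n=2$ case of \eqref{eq:FBBern}, the corollary is nothing but a specialization of Theorem \ref{thm:bernstab}.

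To make the reduction complete I would then sketch the $n=2$ case of \eqref{eq:FBBern}: every classical stable solution $U$ on $\R^2$ has flat free boundary. The natural approach is the classical logarithmic cut-off trick, peculiar to two dimensions. Let
\[
\phi_R(x)=\begin{cases}1, & |x|\leq R,\\ \log(R^2/|x|)/\log R, & R\leq |x|\leq R^2,\\ 0, & |x|\geq R^2,\end{cases}
\]
so that $\int_{\R^2}|\nabla\phi_R|^2\,dx=2\pi/\log R$. Feeding $\phi_R$ into the stability inequality \eqref{stability} gives
\[
\int_{F(U)} H\phi_R^2\,d\m\leq \frac{2\pi}{\log R}\longrightarrow 0\quad\text{as } R\to\infty.
\]
I would next combine this with the $L^1$-mean-curvature estimate of Lemma \ref{lem estimate for mean curvature}, the technical heart of the paper, to upgrade the signed control of $H$ on $F(U)$ into control of $|H|$. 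Nondegeneracy (Theorem \ref{thm nondegeneracy}) guarantees that every component of $F(U)$ meeting a fixed ball grows at least linearly in length, so vanishing of the absolute mean curvature on arbitrarily large scales forces each component of $F(U)$ to be a straight line, and hence $F(U)$ to be flat.

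The main obstacle, as in essentially all Bernstein arguments of this flavor, is the sign issue: the stability inequality only controls $\int H\phi^2\,d\m$ and not $\int|H|\phi^2\,d\m$, so a plain log cut-off is not by itself conclusive. Converting signed to absolute control is precisely the purpose of Lemma \ref{lem estimate for mean curvature}, whose proof uses the novel test function built from $\nabla(h-u)$ with $h$ the harmonic replacement of $u$. I would invoke that output dyadically across the annuli where $\phi_R$ is nontrivial to get the $L^1$ bound on $|H|$ at each scale, and then pass to the limit. After this signed-to-absolute upgrade is in place, concluding straightness of each component of $F(U)$ from the log cut-off is a routine compactness argument, and the corollary follows from Theorem \ref{thm:bernstab}.
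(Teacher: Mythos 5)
Your reduction in the first paragraph is correct and is exactly what the paper does: Corollary~\ref{coro:stab} is a specialization of Theorem~\ref{thm:bernstab} to $n=2$, once the Bernstein-type statement~\eqref{eq:FBBern} has been verified in that dimension.

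However, your sketch of the $n=2$ Bernstein theorem has a genuine gap. You propose to feed the plain logarithmic cut-off $\phi_R$ into the stability inequality to conclude $\int_{F(U)} H\phi_R^2\,d\m\to 0$, and then to upgrade this signed control to absolute control on $|H|$ by invoking Lemma~\ref{lem estimate for mean curvature}. But that lemma cannot be applied here: its hypothesis is that the rescaled free-boundary measure is \emph{small}, $\mu(B_1)=\vare\leq\vare_1$, and indeed the whole purpose of that lemma in the proof of Theorem~\ref{thm nondegeneracy} is to show that smallness is self-improving to vanishing. For a nondegenerate entire solution, the opposite holds: $\mu(B_R)\gtrsim R^{n-1}$, so after rescaling to the unit ball the measure is bounded below by a universal positive constant and the lemma's hypothesis fails at every scale. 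Moreover, the statement ``vanishing of the absolute mean curvature forces each component of $F(U)$ to be a straight line'' is not by itself enough: a line-free boundary in $\R^2$ can have $H=0$ without $U$ being a one-plane or two-plane solution unless one also controls $\nabla^2 U$ in the bulk.

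The paper avoids these difficulties by choosing a sharper test function. Instead of $\phi=\phi_R$, plug $\phi=|\nabla U|\,\eta$ into the stability inequality and integrate by parts, using $\tfrac12\de_\nu|\nabla U|^2=H$ on $F(U)$ and $\Delta|\nabla U|^2=2|\nabla^2 U|^2$ in $\{U>0\}$. The mean-curvature boundary term cancels against the stability term and one is left with
\[
\int_{\O}\left(|\nabla^2 U|^2-|\nabla|\nabla U||^2\right)\eta^2\,dx\leq\int_\O|\nabla U|^2|\nabla\eta|^2\,dx.
\]
Lemma~\ref{lem:harmonic} and the Lipschitz bound $|\nabla U|\leq C$ then give directly $\int_\O|\nabla^2 U|^2\eta^2\,dx\leq C\int_\O|\nabla\eta|^2\,dx$. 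With $\eta=\eta_R$ the logarithmic cut-off, the right-hand side is $O(1/\log R)$, so $\nabla^2 U\equiv 0$ in $\O$ and $U$ is linear in each connected component of its positive phase, giving flatness. This is an interior Hessian estimate in the bulk rather than a mean-curvature estimate on the free boundary; it is the correct analogue of the Schoen--Simon--Yau computation in the free boundary setting, and it bypasses the sign issue entirely rather than repairing it.
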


In the next section we prove Theorem \ref{thm nondegeneracy}. In Section \ref{sec:proofstab} we establish Theorem \ref{thm:bernstab} as well as the Bernstein type Theorem \ref{prop:stab} concerning entire stable solutions of \eqref{eqn} in $\R^2$.  In the Appendix we provide the necessary technical results for the proof of Theorem \ref{thm:bernstab}.

\section{Proof of Theorem \ref{thm nondegeneracy}}

In what follows the letters $C,c$ (possibly with indices and primes) will denote positive constants which depend only on the dimension $n$, and which may change from line to line. The ball of radius $r$, centered at $x\in \R^n$, is denoted by $B_r(x)$ and $B_r:=B_r(0)$. By $\H^k$ we will denote the Hausdorff measure of dimension $k$. For ease of notation, we will often write $F:=F(u)$ and $\O:=B_1^+(u)$. 

We start with the following auxiliary result about harmonic functions. 
\begin{lem}\label{lem:harmonic}
Let $v$ be a harmonic function in a domain $D\subseteq \R^n$. Then 
\begin{equation}\label{lem:harmonic:eq:2ff}
c |\nabla^2 v|^2 \leq  |\nabla^2 v|^2 - |\nabla |\nabla v| |^2 \leq |\nabla^2 v|^2 \quad \text{a.e.\ in } D,
\end{equation}
for the constant $c=\big(2(n-1)\big)^{-1}$.
\end{lem}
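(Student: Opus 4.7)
The upper estimate is immediate: $|\nabla|\nabla v||^2 \geq 0$ forces $|\nabla^2 v|^2 - |\nabla|\nabla v||^2 \leq |\nabla^2 v|^2$, independently of the harmonicity of $v$. So the real content lies in the lower bound, namely
$$|\nabla|\nabla v||^2 \leq \frac{2n-3}{2(n-1)}\, |\nabla^2 v|^2 \quad \text{a.e.}$$
This is a refined Kato-type inequality for harmonic functions and I would prove it pointwise at any point $x_0$ where $\nabla v(x_0) \neq 0$. Off this set, either $v$ is constant (trivial) or, by real analyticity of harmonic functions, $\{\nabla v = 0\}$ has Lebesgue measure zero, so it is enough to deal with the nonvanishing case.

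Fix such $x_0$ and rotate coordinates so that $\nabla v(x_0) = |\nabla v(x_0)|\, e_1$. Differentiating $|\nabla v|^2 = \sum_i v_i^2$ gives $|\nabla v|\, \partial_j |\nabla v| = \sum_i v_i v_{ij}$, so evaluating at $x_0$ yields $\partial_j |\nabla v|(x_0) = v_{1j}(x_0)$. Therefore
\begin{equation*}
|\nabla|\nabla v||^2(x_0) = \sum_{j} v_{1j}^2,
\qquad
|\nabla^2 v|^2(x_0) = v_{11}^2 + 2\sum_{j\geq 2} v_{1j}^2 + \sum_{i,j\geq 2} v_{ij}^2,
\end{equation*}
so the difference becomes
\begin{equation*}
|\nabla^2 v|^2 - |\nabla|\nabla v||^2 = \sum_{j\geq 2} v_{1j}^2 + \sum_{i,j\geq 2} v_{ij}^2.
\end{equation*}

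Now harmonicity enters: $\Delta v = 0$ means $v_{11} = -\sum_{i\geq 2} v_{ii}$, hence Cauchy--Schwarz gives $v_{11}^2 \leq (n-1)\sum_{i\geq 2} v_{ii}^2 \leq (n-1)\sum_{i,j\geq 2} v_{ij}^2$. Doubling the previous display and substituting,
\begin{equation*}
2\bigl(|\nabla^2 v|^2 - |\nabla|\nabla v||^2\bigr) \;\geq\; 2\sum_{j\geq 2} v_{1j}^2 + \sum_{i,j\geq 2} v_{ij}^2 + \frac{v_{11}^2}{n-1}
\;\geq\; \frac{1}{n-1}\, |\nabla^2 v|^2,
\end{equation*}
where in the last step I use $2 \geq 2/(n-1)$ and $1 \geq 1/(n-1)$, both valid for $n \geq 2$. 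This is exactly the desired bound with $c = 1/(2(n-1))$, and one checks that in $n=2$ every inequality collapses to an equality.

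No serious obstacle is anticipated; the main thing to be careful about is merely the bookkeeping of the off-diagonal entries $v_{1j}$ versus the block $\{v_{ij}\}_{i,j\geq 2}$, and the measure-zero treatment of $\{\nabla v = 0\}$.
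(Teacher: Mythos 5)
Your argument is correct and follows essentially the same route as the paper's: you rotate coordinates so that $\nabla v$ is axis-aligned, identify $|\nabla|\nabla v||^2$ with the squared row of the Hessian in that direction, and invoke harmonicity through a Cauchy--Schwarz estimate on the remaining diagonal entries. The paper arranges the algebra a bit differently (writing the difference as $\tfrac{1}{2}|\nabla^2 v|^2 + \tfrac{1}{2}\sum_{i,j<n} v_{ij}^2 - \tfrac{1}{2}v_{nn}^2$ before applying AM--GM), but the substance is identical and your more direct bookkeeping is, if anything, slightly cleaner.
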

\begin{proof}
If $v =\text{const}$, then \eqref{lem:harmonic:eq:2ff} obviously holds. Assume that $v$ is a nontrivial harmonic function in $D$. 
Then $\nabla v(p) \neq 0$ a.e.\ $p\in D$. Fix such a point $p$  and choose a Euclidean coordinate system $(x_1, \ldots, x_n)$ so that the unit vector along $x_n$, $e_n =\nabla v(p)/|\nabla v(p)|$. In this way, $v_i(p) = 0$ for $i\in S:=\{1, 2,\ldots, (n-1)\}$ and $v_n(p) = |\nabla v(p)|$. We then compute at $p$ that $|\nabla |\nabla v|^2|^2 = 4 v_n^2 \sum_j v_{nj}^2$, thereby
\begin{equation}\label{lem:harmonic:eq1}
|\nabla ^2 v|^2 - |\nabla |\nabla v||^2 = \sum_{i, j} v_{ij}^2 - \sum_j v_{nj}^2  = \frac{1}{2} |\nabla^2 v|^2 + \frac{1}{2} \sum_{i,j\in S} v_{ij}^2 - \frac{1}{2}v_{nn}^2.
\end{equation}
Now, since $v$ is harmonic, we have $v_{nn}=-\sum_{i\in S} v_{ii}$ and the AM-GM inequality yields
\[
\frac{v_{nn}^2}{n-1} \leq \sum_{i\in S} v_{ii}^2.  
\]
Combining it with \eqref{lem:harmonic:eq1}, we conclude that at $p$,
\begin{align*}
|\nabla ^2 v|^2 - |\nabla |\nabla v||^2 & \geq \frac{1}{2} \left( |\nabla^2 v|^2 + \sum_{i\in S} v_{ii}^2 - v_{nn}^2 \right) \\
& \geq \frac{1}{2} \left(|\nabla^2 v|^2 - \left((1-\frac{1}{n-1}\right)v_{nn}^2 \right) \geq \frac{1}{2(n-1)}|\nabla^2 v|^2. 
\end{align*}
\end{proof}

We now formulate and prove a key integral estimate for the mean curvature of stable free boundaries. 

\begin{lem}\label{lem estimate for mean curvature}
Let $u$ be a stable classical solution of \eqref{eqn} in  $B_1\subset \R^n$, $n\geq 3$, which satisfies $|\nabla u|\leq C$ in $B_1^+(u)$.   Assume that $0\in F$. There exist two universal  constants $\varepsilon_1$ (small) and $C_1$ (large) such that if
\[ \mu(B_1)=\varepsilon\leq \varepsilon_1,\]
then for any $r\in( C_1\varepsilon^{\frac{1}{n(n-1)}},1)$,
\begin{equation}\label{mean curvature bound}
  \int_{F\cap B_{1-r}}|H| \, d\m \leq C_1r^{-2}\varepsilon^{\frac{n}{n-1}}.
\end{equation}
\end{lem}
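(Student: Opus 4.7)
\emph{Strategy.} The plan is to extract an $L^1$ mean-curvature bound from the stability inequality by testing it against a function built from the harmonic replacement of $u$, combined with the Kato-type improvement of Lemma \ref{lem:harmonic}. Let $h$ be the harmonic function in $B_1$ with $h|_{\partial B_1}=u|_{\partial B_1}$, and set $v:=h-u$. Then $v\ge 0$ in $B_1$, $v=0$ on $\partial B_1$, $\|v\|_{L^\infty(B_1)}\le C$ by the Lipschitz bound on $u$, and $v$ is harmonic in each of the open sets $\Omega=B_1^+(u)$ and $B_1\setminus\overline{\Omega}$, with a jump of $-1$ in the normal derivative of $v$ across $F$. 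Distributionally $-\Delta v=\mu$, and testing this identity against $v$ yields the fundamental energy identity
\[
\int_{B_1}|\nabla v|^2\,dx=\int_F h\,d\mu\le C\varepsilon.
\]

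\emph{Volume bound.} A coarea/isoperimetric argument should convert the smallness of $\mu(B_1)=\varepsilon$ into $|\Omega\cap B_{1-r/2}|\le C\varepsilon^{n/(n-1)}$: by coarea one selects a radius $\rho\in(1-r,1-r/2)$ on which $\mathcal H^{n-1}(\Omega\cap\partial B_\rho)\le C|\Omega|/r$, so that the perimeter of $\Omega\cap B_\rho$ is essentially bounded by $\mu(F)\le\varepsilon$, and the standard relative isoperimetric inequality delivers the volume bound.

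\emph{Stability test function.} Insert the novel choice $\phi=\eta|\nabla v|$ into the stability inequality, where $\eta\in C^\infty_c(B_{1-r/2})$ is a standard cutoff with $\eta\equiv 1$ on $B_{1-r}$ and $|\nabla\eta|\le C/r$, and $|\nabla v|$ on $F$ is understood as the trace from $\Omega$. A direct pointwise calculation on $F$ (using $\nabla u=\nu$, $u_{\nu\nu}=-H$, and the vanishing of tangential derivatives of $u$) gives
\[
|\nabla v|^2\big|_{F,\mathrm{in}}=1-2\partial_\nu h+|\nabla h|^2,\qquad \tfrac12\partial_\nu(|\nabla v|^2)\big|_{F,\mathrm{in}}=-H+R(h),
\]
where $R(h)$ depends only on $h$ and its derivatives. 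Since $v$ is harmonic in $\Omega$, $\Delta|\nabla v|^2=2|\nabla^2 v|^2$; integrating the cross term in $\int_\Omega|\nabla(\eta|\nabla v|)|^2\,dx$ by parts and invoking Lemma \ref{lem:harmonic} to gain $c|\nabla^2 v|^2\le|\nabla^2 v|^2-|\nabla|\nabla v||^2$, one arrives at a schematic inequality of the form
\[
c\int_\Omega\eta^2|\nabla^2 v|^2\,dx+\int_F H\eta^2\,d\mu \le \frac{C}{r^2}\int_{B_1}|\nabla v|^2\,dx+\mathcal E(h),
\]
with $\mathcal E(h)$ an error term purely in $h$.

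\emph{Passing to $|H|$ and main obstacle.} By the energy identity the first term on the right is $\le C\varepsilon/r^2$, and the volume bound renders $\mathcal E(h)$ of lower order. To upgrade the signed bound on $\int_F H\,d\mu$ to $\int_F|H|\,d\mu$, I would exploit $H=-u_{\nu\nu}=h_{\nu\nu}-v_{\nu\nu}$ on $F$: a trace estimate converts the Hessian control $\int_\Omega\eta^2|\nabla^2 v|^2\le C\varepsilon/r^2$ into an $L^2$ bound for $H$ on $F$, and Cauchy--Schwarz against $\mu(B_1)\le\varepsilon$ combined with the volume bound delivers the target $Cr^{-2}\varepsilon^{n/(n-1)}$. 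The hardest step is the stability computation itself: cleanly separating the $H$ contribution from the harmonic remainder $R(h)$ in the boundary flux $\partial_\nu|\nabla v|^2$, and verifying that $\mathcal E(h)$ is of the right order. The key feature of the test function $\phi=\eta|\nabla v|$, as opposed to the more obvious $\eta|\nabla u|$, is that it inherits the smallness $\int_{B_1}|\nabla v|^2\le C\varepsilon$ and therefore injects the crucial $\varepsilon$-factor on the right-hand side that powers the De Giorgi iteration in the next step.
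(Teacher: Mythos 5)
Your overall strategy --- decompose $u=h-v$, plug $\phi=\eta|\nabla v|$ into the stability inequality, and use Lemma~\ref{lem:harmonic} to recover an $L^2$-Hessian bound for $v$ --- is exactly the right skeleton and matches the paper's. But there are three concrete gaps, and together they make the argument fall short of the target exponent $\varepsilon^{n/(n-1)}$.

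First and most importantly, you bound the energy by $\int_{B_1}|\nabla v|^2 = \int_F h\,d\mu \le C\varepsilon$, using only the crude estimate $\|h\|_{L^\infty}\le C$ from the Lipschitz bound. This is too weak: the De Giorgi iteration in Theorem~\ref{thm nondegeneracy} needs a \emph{superlinear} gain in $\varepsilon$, and $\varepsilon$ alone does not iterate down. The paper's key preliminary step is the pointwise bound $\sup_{B_1} v\lesssim\varepsilon^{1/(n-1)}$, obtained by splitting the Newtonian potential at scale $\rho=\varepsilon^{1/(n-1)}$ and using the upper density bound $\mu(B_r(x))\le Cr^{n-1}$. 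Since $h=v$ on $F$, this immediately upgrades your energy identity to $\int_{B_1}|\nabla v|^2\le C\varepsilon^{n/(n-1)}$ and also gives the crucial smallness $|\nabla h|\lesssim r^{-1}\varepsilon^{1/(n-1)}$, $|\nabla^2 h|\lesssim r^{-2}\varepsilon^{1/(n-1)}$ on $F\cap B_{1-r/2}$ via Harnack, which is what makes the harmonic-remainder error $\mathcal E(h)$ controllable. Your proof never establishes any $\varepsilon$-smallness of $h$ or its derivatives on $F$, so the error terms cannot actually be shown to be ``of lower order.''

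Second, the substitute you propose --- a volume bound $|\Omega\cap B_{1-r/2}|\le C\varepsilon^{n/(n-1)}$ via coarea plus the relative isoperimetric inequality --- is false. The relative perimeter of $\Omega\cap B_\rho$ includes $\mathcal H^{n-1}(\Omega\cap\partial B_\rho)$, which need not be small: with $0\in F$ and $\mu(B_1)$ tiny, $\Omega$ can be essentially all of $B_1$ (compare the degenerate example $u_\varepsilon=\frac{\varepsilon}{n-2}[1-(|x|/\varepsilon)^{2-n}]^+$, where the free boundary has small area but the positive phase fills the ball). So there is nothing forcing $|\Omega|$ to be small. The set that \emph{is} small, and which the paper actually uses, is the sublevel/superlevel set $\{|\nabla v|>1/2\}$, whose measure is $\le 4\int|\nabla v|^2\lesssim \varepsilon^{n/(n-1)}$ by Chebyshev; this appears in Step 2 of the paper's argument to control $\int_F H^-\,d\mu$.

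Third, your proposed passage from a signed bound on $\int_F H\,d\mu$ to a bound on $\int_F|H|\,d\mu$ --- a ``trace estimate'' converting $\int_\Omega\eta^2|\nabla^2 v|^2$ into an $L^2(F)$ bound for $H$, then Cauchy--Schwarz against $\mu(B_1)$ --- is not viable as stated. There is no such trace theorem for $\nabla^2 v\in L^2(\Omega)$ onto $F$, and even granting one formally, Cauchy--Schwarz gives $\int_F|H|\lesssim\varepsilon^{1/2}\bigl(\int_F H^2\bigr)^{1/2}$, which loses an $\varepsilon^{1/2}$ where you need $\varepsilon^{n/(n-1)}$. The paper instead first bounds $\int_F H^-\eta^2\,d\mu$ directly (Step 2) by applying the divergence theorem to $\eta^2\nabla[( \tfrac12|\nabla u|^2-\tfrac12)^+ ]$, using Lemma~\ref{lem:harmonic} and the Chebyshev estimate above; then it applies stability twice (Steps 3 and 4) and adds $2\int_F H^-|\nabla v|^2\eta^2\,d\mu$ to both sides to absorb the negative part. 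Your proof needs to be restructured along these lines to produce the stated exponent.
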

\begin{proof}
We divide the proof into four steps.	
	
	{\bf Step 1. A decomposition of $u$.}

Write $u=h-v$, where $h$ is the harmonic extension of the boundary values of $u$ on $\partial B_1$ to $B_1$, while $v$ solves
\begin{equation}\label{eqn for v}
  \left\{\begin{aligned}
 -\Delta v =\mu  & \quad \mbox{in } B_1,\\
v=0 & \quad \mbox{on } \partial B_1.
\end{aligned}\right.
\end{equation}
Since $u$ is Lipschitz continuous on $\de B_1$, standard elliptic regularity theory implies that $h\in C^{1/2}(\overline{B_1})\cap C^{\infty}(B_1)$. Thus,  $v\in C^{1/2}(\overline{B_1})$ and is locally Lipschitz continuous in $B_1$. In fact,  $v$ is smooth in $\overline{\Omega}$ and $\overline{B_1\setminus \Omega}$, but its gradient jumps when crossing the free boundary $F$.

We claim that
  \begin{equation}\label{sup bound}
  \sup_{B_1}v\lesssim \varepsilon^{\frac{1}{n-1}}.
\end{equation}
\begin{proof}[Proof of \eqref{sup bound}]
Let $v^\ast$ be  the solution of the Poisson equation
\[-\Delta v^\ast= \mu\lfloor_{B_1}, \quad \mbox{in } \R^n\]
given by the Newtonian potential
\[ v^\ast(x)= c_n\int_{B_1}|x-y|^{2-n}d\mu(y), \quad \forall x\in\R^n.\]
Take $\rho>0$, and divide the above integral into two parts
\[
v^\ast(x) = \int_{B_1\cap B_\rho(x)} + \int_{B_1\setminus B_\rho(x)} =: \mathrm{I} + \mathrm{II}.
\]
For the first one, we get by  \eqref{upper bound} that
\begin{eqnarray}\label{L infty from neighborhood}
  \mathrm{I} &\lesssim & \sum_{k=0}^{+\infty} \left(2^{-k}\rho\right)^{2-n} \mu\left( B_{2^{-k}\rho}(x) \right) \nonumber \\
   &\lesssim& \sum_{k=0}^{+\infty}  2^{-k}\rho \\
   &\lesssim & \rho. \nonumber
\end{eqnarray}
In the second part we have $|x-y|>\rho$, so
\[ \mathrm{II}\lesssim \rho^{2-n}\mu(B_1).\]
Combining these two estimates by choosing $\rho:=\mu(B_1)^{1/(n-1)}$, we get
\[ \sup_{\R^n}v^\ast \lesssim \mu(B_1)^{\frac{1}{n-1}}.\]
  By the maximum principle, $v\leq v^\ast$ in $ B_1$,
and \eqref{sup bound} follows.
\end{proof}
The first  consequence of \eqref{sup bound} is that
\begin{equation}\label{energy for v}
	\int_{B_1}|\nabla v|^2 \, dx =-\int_{B_1}v\Delta v\leq C\varepsilon^{\frac{n}{n-1}},
\end{equation}
after an integration by parts using \eqref{eqn for v} and the bound \eqref{sup bound}. 

The second consequence is that the derivatives of $h$ on the free boundary $F$ are small. Indeed, since $u=0$ on $F$, we have 
\[ h(p)=v(p)\leq C\varepsilon^{\frac{1}{n-1}} \quad \text{for all } p\in F.\]
Furthermore, since $h$ is positive and harmonic in $B_r(p)\subset B_1$, for $r=d(p, \de B_1)$, Harnack's inequality tells us that
\begin{equation}\label{eq:h}
h(x)\leq c h(p) \leq c_1 \vare^{1/(n-1)} \quad \text{for all } x\in B_{r/2}(p).
\end{equation}
Thus, by interior derivative estimates for harmonic functions, we get
\begin{equation}\label{gradient estimate for h}
	|\nabla h|(p) \leq\frac{C \varepsilon^{\frac{1}{n-1}}}{r}, \quad |\nabla^2 h|(p) \leq \frac{C\varepsilon^{\frac{1}{n-1}}}{r^2} \quad \text{for } p\in F \text{ and } r=d(p,\de B_1).
\end{equation}
As $|\nabla u|=1$ on $F$, \eqref{gradient estimate for h} implies that, 
\begin{align}
& \inf_{F\cap B_{1-r/2}} |\nabla v|\geq 1-2Cr^{-1}\varepsilon^{\frac{1}{n-1}} \geq 3/4,  \label{lower gradient bound} \\
& \sup_{F\cap B_{1-r/2}} |\nabla v|\leq  1+2Cr^{-1}\varepsilon^{\frac{1}{n-1}} \leq 5/4, \label{upper gradient bound}
\end{align}
whenever $r\geq C_1\varepsilon^{1/(n-1)}$, where $C_1$ is sufficiently large and $\vare\leq \vare_1$ sufficiently small. 

\medskip

{\bf Step 2. Integral estimate on $H^{-}$}.

Let us decompose the mean curvature $H$ of the free boundary $F$ into its positive and negative parts:
\[
H = H^+ - H^-.
\]
In this second step we will obtain the following integral estimate for $H^-$:
\begin{equation}\label{eq:Hneg}
\int_{F \cap B_{1-r}} H^- \, d\m \leq C \e^{n/(n-1)} r^{-2}, \quad \text{provided } r\geq (C_1 \vare^{1/(n-1)})^{1/n}.
\end{equation}
For the purpose, observe that if $\nu$ denotes the outer unit normal to the positive phase $\O$ and $$w:=\frac{1}{2}(|\nabla u|^2 -1),$$ then $H=w_\nu$ on $F$ and 
\[
-H^- = - \de_\nu [w^+] \quad \text{on } F.
\]
Take $\eta\in C_0^\infty(B_{1-r/2})$ to be a standard cut-off function, such that $0\leq \eta \leq 1$, $\eta\equiv 1$ in $B_{1-r}$ and $|\nabla \eta|\leq 4/r$. 
Applying the Divergence Theorem to $\eta^2\nabla[\rho_\e(w)]$ in $\O$, where $\rho_\e(t)$ is a standard, convex regularization of $t^+$, we get after taking the $\lim_{\e\to 0}$:
\begin{align}\label{eq:calcHneg}
\int_{F} H^- \eta^2 \, d\m  & \leq  -\int_{\{|\nabla u|>1\}} \eta^2 \D w \, dx - \int_{\{|\nabla u|>1\}} \nabla \eta ^2 \cdot \nabla w \, dx  \notag \\ 
& = -\int_{\{|\nabla u|>1\}} |\nabla^2 u|^2\eta ^2 \, dx  - \int_{\{|\nabla u|>1\}} 2|\nabla u| \eta \left(\nabla |\nabla u| \cdot \nabla \eta\right) \, dx \notag \\
& \leq - \int_{\{|\nabla u|>1\}} \left(|\nabla^2 u|^2 - |\nabla |\nabla u||^2 \right)\eta^2\, dx + \int_{\{|\nabla u|>1\}}  |\nabla u|^2 |\nabla\eta|^2 \, dx, \notag \\
& \leq C \int_{\{|\nabla u|>1\}} |\nabla \eta|^2 \, dx \leq C r^{-2} |\{|\nabla u|>1\}\cap B_{1-r/2}| \notag \\
& \leq  C r^{-2} |\{|\nabla v|> 1- |\nabla h|\}\cap B_{1-r/2}|.
\end{align}
In the computation above, we have used the facts that: $\D |\nabla u|^2 = 2 |\nabla^2 u|^2$ in $\O$ on account of $u$ being harmonic in $\O$, that $\left(|\nabla^2 u|^2 - |\nabla |\nabla u||^2 \right)\geq 0$, and that $|\nabla u|\leq c$.

Now, since $u(0)=0$, we have $h(0)=v(0)\leq c \vare^{1/(n-1)}$, so that Harnack's inequality yields
\[
0\leq h \leq c_0 h(0) r^{1-n} \leq c \vare^{1/(n-1)}  r^{1-n} \quad \text{in } B_{1-r/4}. 
\]
Hence, by interior derivative estimates
\begin{equation}\label{eq:estDh}
|\nabla h| \leq C \vare^{1/(n-1)}  r^{-n} \leq \frac{1}{2}  \quad \text{in } B_{1-r/2} \quad \text{as long as } r\geq (C_1 \vare^{1/(n-1)})^{1/n},
\end{equation}
for some large absolute constant $C_1$.  Combining \eqref{eq:estDh} with the estimate in \eqref{eq:calcHneg}, we obtain by the Chebyshev inequality the desired bound:
\begin{align*}
\int_{F\cap B_{1-r}} H^- \, d\m  & \leq Cr^{-2}|\{|\nabla v|> 1- |\nabla h|\}\cap B_{1-r/2}| \leq Cr^{-2} |\{|\nabla v|> 1/2 \}\cap B_{1-r/2}| \\
& \leq 4C r^{-2}\int_{B_1} |\nabla v|^2 \leq C' \vare^{n/(n-1)} r^{-2}, \quad \text{provided } r\geq (C_1 \vare^{1/(n-1)})^{1/n},
\end{align*}
where the last inequality above follows from \eqref{energy for v}.

\medskip

{\bf Step 3. Using the stability condition.}

Fix $r \in (C_1\varepsilon^{\frac{1}{n(n-1)}},1)$ and take the same cut-off function $\eta\in C_0^\infty(B_{1-r/2})$ 
from Step 2. Plugging $|\nabla v|\eta$ into the stability inequality \eqref{stability} and integrating by parts, we obtain
\begin{eqnarray}\label{stability with v}
  \int_{F\cap B_1}H|\nabla v|^2\eta^2  \, d\m &\leq & \int_{\Omega\cap B_1}|\nabla\left(|\nabla v|\eta\right)|^2 \, dx \nonumber \\
   &=& \int_{\Omega\cap B_1}|\nabla v|^2|\nabla\eta|^2+|\nabla|\nabla v||^2\eta^2+2|\nabla v|\eta \nabla|\nabla v|\cdot\nabla\eta \, dx \nonumber\\
      &=& \int_{\Omega\cap B_1}|\nabla v|^2|\nabla\eta|^2+|\nabla|\nabla v||^2\eta^2-\frac{1}{2}\Delta|\nabla v|^2\eta^2 \, dx \\
      &&+\frac{1}{2}\int_{F\cap B_1}\partial_\nu|\nabla v|^2 \eta^2 \, d\m \nonumber\\
      &=& \int_{\Omega\cap B_1}|\nabla v|^2|\nabla\eta|^2-\left(|\nabla^2 v|^2-|\nabla|\nabla v||^2\right)\eta^2\, dx  \nonumber\\
      &&+\frac{1}{2}\int_{F\cap B_1}\partial_\nu|\nabla v|^2 \eta^2 \, d\m. \nonumber
\end{eqnarray}

To compute $\partial_\nu |\nabla v|^2$ along $F$, we fix a point $p$ on the free boundary. In suitable Euclidean coordinates $(x^\prime, x_n)\in \R^{n-1}\times \R$ near $p=(0^\prime,0)$, the free boundary $F$ is locally given by
the graph 
$$\{x_n=g(x^\prime)\}, \quad x^\prime\in B_\rho^{n-1}(0^\prime),$$ 
where $g(0^\prime)=0$ and $\nabla^\prime g(0^\prime)=0$. Here we assume $u>0$ in $B_\rho\cap\{x_n>g(x^\prime)\}$. Hence $u_n(0)=1$, $\nabla^\prime u(0)=0$ and $\nu(0)=-\nabla u(0)=-e_n$. Then by differentiating the free boundary condition and utilizing the fact that $\Delta u=0$, we get
\begin{equation}\label{second derivatives of u}
  u_{in}(0)=0, \quad u_{nn}(0)=-\Delta^\prime u(0)=-H(0).
\end{equation}

In the following, we use  summation convention over repeated indices, where  $i\in\{1,\ldots, n\}$.
Under these assumptions, at the origin it holds that
\begin{eqnarray*}
 \frac{1}{2} \partial_\nu|\nabla v|^2&=& -\nabla^2 v(\nabla v,\nabla u)=-v_{i n} v_i   \\
  &=&  -\left(h_{in}-u_{in}\right)\left(h_i -u_i\right)  \\
  &=& -h_{i n}h_i-u_{i n}u_i+h_{i n}u_i+u_{i n} h_i\\
   &=&-u_{nn}+u_{nn} h_n-h_{i n}h_i+h_{nn}  = u_{nn}v_n - h_{i n}h_i+h_{nn}\\
   &=& -H \nabla v\cdot\nabla u+O\left(r^{-3}\varepsilon^{\frac{2}{n-1}} + r^{-2} \varepsilon^{\frac{1}{n-1}} \right)\\
&=&  -H \nabla v\cdot\nabla u+O\left(r^{-2} \varepsilon^{\frac{1}{n-1}} \right) \quad \text{on } F\cap B_{1-r/2}, \text{ for } r\geq C_1\vare^{\frac{1}{n-1}}, 
\end{eqnarray*}
where the penultimate line follows from  \eqref{second derivatives of u} and the interior estimates \eqref{gradient estimate for h}.
Plugging this formula back into \eqref{stability with v}, we get
\begin{eqnarray*}
 &&\int_{\Omega\cap B_1} \left(|\nabla^2 v|^2-|\nabla|\nabla v||^2\right)\eta^2 \, dx \\
 && \leq \int_{\Omega}|\nabla v|^2|\nabla\eta|^2 \, dx -\int_{F}H\left(|\nabla v|^2+\nabla v\cdot\nabla u\right)\eta^2 \, dx + O\left(r^{-2}\varepsilon^{\frac{1}{n-1}}\right)\int_{F}\eta^2 \, d\m.
\end{eqnarray*}
Using \eqref{gradient estimate for h}, we obtain for $r\geq C_1 \vare^{1/(n-1)}$,
\[
|\nabla v|^2+\nabla v\cdot \nabla u = \nabla v \cdot \nabla h = (-\nabla u + \nabla h)\cdot \nabla h =O\left(r^{-1}\varepsilon^{\frac{1}{n-1}}\right) \quad \text{on } F\cap B_{1-r/2}.\]
Therefore
\begin{eqnarray*}
	&&\int_{\Omega\cap B_1}  \left(|\nabla^2 v|^2-|\nabla|\nabla v||^2\right)\eta^2 \, dx \\
	& \leq & \int_{\Omega\cap B_1}|\nabla v|^2|\nabla\eta|^2 \, dx +Cr^{-2}\varepsilon^{\frac{1}{n-1}}\int_{F\cap B_1}\left(1 + |H| r \right) \eta^2\, d\m.
\end{eqnarray*}
Now the result of Lemma \ref{lem:harmonic} allows us to bound the Hessian of $v$:
\begin{equation}\label{1}
	\int_{\Omega\cap B_1} |\nabla^2 v|^2\eta^2 \, dx \leq c\int_{\Omega\cap B_1}|\nabla v|^2|\nabla\eta|^2 \, dx +Cr^{-2}\varepsilon^{\frac{1}{n-1}}\int_{F\cap B_1}\left(1 + |H| r \right) \eta^2\, d\m.
\end{equation}

\medskip

{\bf Step 4. Using the stability condition again.}
Now we start from the second line in \eqref{stability with v} and then use \eqref{1} to get
\begin{eqnarray}\label{second use of stability with v}
\int_{F\cap B_1}H|\nabla v|^2\eta^2 \, d\m
    &\leq & C  \int_{\Omega\cap B_1}|\nabla v|^2|\nabla\eta|^2+|\nabla^2 v|^2\eta^2 \, dx \nonumber \\
    &\leq & C\int_{\Omega\cap B_1}|\nabla v|^2|\nabla\eta|^2 \, dx +Cr^{-2}\varepsilon^{\frac{1}{n-1}} \int_{F\cap B_1}\left(1+|H|r\right)\eta^2 \, d\m. 
\end{eqnarray}
Adding $2 \int_{F} H^{-} |\nabla v|^2 \eta^2 \, d\m$ to both sides of \eqref{second use of stability with v}, we obtain
\begin{align}\label{eq:stabtycont}
\int_{F}|H| |\nabla v|^2\eta^2 \, d\m & \leq  C\int_{\Omega}|\nabla v|^2|\nabla\eta|^2 \, dx +Cr^{-2}\varepsilon^{\frac{1}{n-1}} \int_{F} \left(1+|H|r\right)\eta^2 \, d\m \\
& + 2\int_{F} H^- |\nabla v|^2\eta^2 \, d\m. \notag
\end{align}
If $r\geq (C_1\varepsilon^{1/(n-1)})^{1/n} \geq C_1\varepsilon^{1/(n-1)}$, where $C_1$ is sufficiently large, then the gradient bounds \eqref{lower gradient bound}-\eqref{upper gradient bound} imply
\[\frac{1}{2}\leq |\nabla v|^2 - Cr^{-1} \vare^{1/(n-1)} \leq 2 \quad \mbox{on} ~~ F\cap B_{1-r/2}.\]
Thus, collecting all the integral terms involving $|H|$ on the left-hand side, \eqref{eq:stabtycont} becomes
\[\int_{F}|H|\eta^2 \, d\m \leq C\int_{\Omega}|\nabla v|^2|\nabla\eta|^2 \, dx +Cr^{-2}\varepsilon^{\frac{1}{n-1}} \int_{F}\eta^2 \, d\m + C \int_F H^{-} \eta^2. \]
Now, putting together the bounds \eqref{energy for v} of Step 1 and \eqref{eq:Hneg} of Step 2,  we finally obtain the integral bound \eqref{mean curvature bound}.
\end{proof}

\begin{proof}[Proof of Theorem \ref{thm nondegeneracy}]

We will only prove the $n\geq 3$ case of Theorem \ref{thm nondegeneracy}. If $n=2$, the solution of \eqref{eqn} in $B_1\subset \R^2$ can be viewed as  a solution in $B_1\times \R \subset\R^3$  by adding a redundant variable, which preserves the stability condition.

Fix $p\in F\cap B_{1/4}$ and $r\in (0,1/4)$.  After recentering and rescaling, 
$$u \rightarrow \tilde{u}(x):=u(p+rx)/r$$ we may assume that we are dealing with a Lipschitz continuous, stable classical solution $u$, defined in $B_1$, with $0\in F$.  We use a De Giorgi type iteration to prove that if $\varepsilon:=\mu(B_1)$ is small enough (to be chosen later in the proof), then $\mu(B_{1/2})=0$, leading to a contradiction.

Rewriting the estimate \eqref{mean curvature bound} for scales $1/4<R_1<R_2$, where $1-r=R_1/R_2$, we get
\begin{equation}\label{eq:MCB}
\int_{F\cap B_{R_1}} |H| \, d\m \leq \frac{C_1\mu(B_{R_2})^{\frac{n}{n-1}}}{(R_2-R_1)^2},
\end{equation}
provided
\[
R_2- R_1\geq C_1 R_2^{1-\frac{1}{n}} \mu(B_{R_2})^{\frac{1}{n(n-1)}} \geq (C_1/4) \mu(B_{R_2})^{\frac{1}{n(n-1)}}. 
\]
Set
 \[r_0=1, \quad a_0=\mu(B_1)\]
 and for any $m\geq 1$,
\[ \rho_m=\max\{C_1 a_{m-1}^{\frac{1}{n(n-1)}}, 2^{-m-1}\}, \quad r_m=r_{m-1}-\rho_m, \quad a_m=\mu(B_{r_m}).\]
Take $R_1=(r_m+r_{m-1})/2$ and $R_2 = r_{m-1}$, so that 
\[
R_2-R_1 = \rho_m/2 \geq (C_1/2) a_{m-1}^{\frac{1}{n(n-1)}} \geq (C_1/4) \mu(B_{R_2})^{\frac{1}{n(n-1)}}.
\]
Thus, we may apply \eqref{eq:MCB}, obtaining
\begin{align}\label{eq:anteMS}
 \mu\left(B_{(r_m+r_{m-1})/2}\right) \rho_m^{-1} &+\int_{F\cap B_{(r_m+r_{m-1})/2}}|H| \, d\m \leq a_{m-1}\rho_m^{-1}  + C_1(\rho_m/2)^{-2} a_{m-1}^{n/(n-1)} \notag \\
& \leq a_{m-1}\rho_m^{-1} + (\rho_m^{-1} a_{m-1}) (4C_1 a_{m-1}^{1/(n-1)}\rho_m^{-1}) \leq c a_{m-1}\rho_m^{-1} \\
& \leq C a_{m-1}2^m, \notag
\end{align}
since $4C_1 a_{m-1}^{1/(n-1)}\rho_m^{-1} \leq 4 a_{m-1}^{1/n} \leq 1$ when $a_{m-1}\leq \vare$ is small enough.
Let $\phi\in C^1_c(B_{(r_m+r_{m-1})/2})$ be a cut-off function, such that $0\leq \phi\leq 1$, $\phi=1$ in $B_{r_m}$ and $|\nabla \phi| \leq c\rho_m^{-1}$ in $B_1$. Plugging it in the Michael-Simon inequality (\cite{M-Simon}), 
\[
\left(\int_F \phi^{\frac{n-1}{n-2}} \, d\m\right)^{\frac{n-2}{n-1}} \leq C_0 \int_F \left( |\nabla_F \phi| + |H|\phi\right) \, d\mu
\]
and using \eqref{eq:anteMS}, we get 
\begin{align}\label{eq:DG}
 a_m & \leq \int_F \phi^{\frac{n-1}{n-2}} \, d\m \leq C\left(\mu\left(B_{r_m+r_{m-1})/2}\right) \rho_m^{-1} +\int_{F\cap B_{(r_m+r_{m-1})/2}}|H| \, d\m \right)^{\frac{n-1}{n-2}} \notag \\
& \leq c \left(a_{m-1}2^m\right)^{\frac{n-1}{n-2}}. 
\end{align}
We now argue by induction that 
\begin{equation}\label{eq:decay}
a_m\leq \vare \g^{-m}
\end{equation}
for $\g=2^{n-1}>1$ and $\vare\leq \vare_2(n)$ small enough. Set $\k:=\frac{n-1}{n-2}$. We see that \eqref{eq:decay} holds for $m=0$ and assume it is true for $m-1$. Using the recurrence inequality \eqref{eq:DG}, we obtain
\begin{align*}
a_m\leq c 2^{m\k} a_{m-1}^\k \leq c 2^{m\k} \vare^\k \g^{(-m+1)\k} = \vare \g^{-m} (c \vare^{\k-1} \g^\k) \left(2^\k \g^{-(\k-1)}\right)^m \leq \vare \g^{-m},
\end{align*}
if we choose $\g = 2^{\k/(\k-1)}= 2^{n-1}>1$ and $\vare \leq \vare_2(n):=(c\g^\k)^{-1/(\k-1)}$. Employing  \eqref{eq:decay}, we now estimate that for all $m\in \N$
\[
C_1 a_{m-1}^{\frac{1}{n(n-1)}} \leq C_1 \vare^{\frac{1}{n(n-1)}} 2^{-\frac{m-1}{n}},
\]
whence
\[\rho_m \leq \max\{2^{-m-1}, \vare^{\frac{1}{n(n-1)}} 2^{-\frac{m-1}{n}} \} \quad \text{and} \quad r_m=1-\sum_{k=1}^{m}\rho_k\geq 1/2,\]
whenever $\vare\leq \vare_3(n)$ is small enough. Therefore, if we choose $\vare(n):=\min\{\vare_1, \vare_2, \vare_3\}$, we obtain the desired
\[ \mu(B_{1/2})\leq \lim_{m\to+\infty}\mu(B_{r_m})=\lim_{m\to\infty}a_m=0. \qedhere\]
\end{proof}

\section{Proof of Theorem \ref{thm:bernstab}}\label{sec:proofstab}

We will start this section by establishing the Bernstein type statement \eqref{eq:FBBern} for $n=2$:  the only entire stable classical solutions to the one-phase FBP in the plane are, up to rigid motion, the \emph{one-plane} $U(x_1,x_2)=x_2^+$ and the \emph{two-plane} solutions $U(x_1, x_2) = x_2^+ + (x_2 + a)^-$, $a> 0$.

\begin{thm}\label{prop:stab} Let $U:\R^2\to [0,\infty)$ be an entire stable classical solution to the one-phase FBP \eqref{eqn}. Then $u$ is a one-plane or a two-plane solution.
\end{thm}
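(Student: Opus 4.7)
The plan is to apply the logarithmic cut-off trick to the stability inequality (\ref{stability}) with the test function $\phi := |\nabla U|\,\eta_R$, where $\eta_R$ is a standard log cut-off on $\R^2$ with $\eta_R \equiv 1$ on $B_R$, $\eta_R \equiv 0$ outside $B_{R^2}$, and $\int_{\R^2}|\nabla \eta_R|^2\,dx \leq C/\log R$. Since $|\nabla U|\equiv 1$ on $F := F(U)$, the left-hand side of (\ref{stability}) collapses to $\int_F H\,\eta_R^2\,d\mu$.

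The crucial Bochner-type computation mirrors Step 3 of the proof of Lemma \ref{lem estimate for mean curvature}, with $v$ replaced by $U$. Expanding $|\nabla \phi|^2$ inside $\Omega := \{U>0\}$ and integrating the cross term by parts, I use $\Delta |\nabla U|^2 = 2|\nabla^2 U|^2$ in $\Omega$ (from harmonicity of $U$) and the identity $\partial_\nu|\nabla U|^2 = 2H$ on $F$ (derived as in Step 3 from $|\nabla U|\equiv 1$ on $F$ and $\Delta U=0$). The boundary term $\int_F H \eta_R^2$ produced on the right cancels the left-hand side of (\ref{stability}) exactly, leaving
\begin{equation*}
\int_\Omega \bigl(|\nabla^2 U|^2 - |\nabla|\nabla U||^2\bigr)\eta_R^2\,dx \leq \int_\Omega |\nabla U|^2 |\nabla \eta_R|^2\,dx,
\end{equation*}
which via Lemma \ref{lem:harmonic} upgrades to $c \int_\Omega |\nabla^2 U|^2 \eta_R^2\,dx \leq \int_\Omega |\nabla U|^2 |\nabla \eta_R|^2\,dx$.

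To close the argument in $\R^2$, I need a global Lipschitz bound $|\nabla U|\leq C$, which follows from applying (\ref{eq:Lip}) to the rescalings $U_R(y):=U(p+Ry)/R$ at any fixed $p\in F$ and sending $R\to\infty$; the trivial case $F=\emptyset$ is ruled out by Liouville (a nonnegative entire harmonic function would be constant, violating the free boundary condition). Combined with $\int_{\R^2}|\nabla \eta_R|^2 \to 0$, sending $R\to\infty$ and applying monotone convergence yields $\int_\Omega |\nabla^2 U|^2 = 0$, so $U$ is affine on every connected component $\Omega_i$ of $\Omega$.

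It remains to classify. On each $\Omega_i$, the affine formula gives $U(x) = \nu_i \cdot (x - p_i)$ for a unit vector $\nu_i$ and $p_i \in \partial\Omega_i \cap F$, so $\Omega_i \subseteq H_i := \{\nu_i \cdot (x - p_i) > 0\}$. Any point of $\partial \Omega_i$ lying strictly inside $H_i$ would have to be in $F$ (as $\Omega_i$ is a connected component of the open set $\Omega$) but would satisfy $U>0$ by the affine formula, contradicting $U=0$ on $F$; hence $\Omega_i = H_i$ is a full half-plane. Disjoint half-planes in $\R^2$ must have parallel bounding lines, and a simple packing argument permits at most two of them, whence $U$ is either the one-plane solution or a two-plane solution. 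The main obstacle is the Bochner-type cancellation between the integration-by-parts boundary term and the left-hand side of stability, which is what makes the classical log cut-off trick for stable minimal hypersurfaces portable to the free boundary setting; a secondary subtlety is the admissibility of the gradient-discontinuous test function $\phi$, handled as in Step 3 by noting that (\ref{stability}) only involves $\int_\Omega|\nabla\phi|^2$ and $\phi|_F$, both of which make sense for $\phi$ smooth on $\overline{\Omega}$.
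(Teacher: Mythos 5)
Your proof is correct and follows essentially the same route as the paper: plug $\phi = |\nabla U|\eta_R$ into the stability inequality, integrate by parts using $\Delta|\nabla U|^2 = 2|\nabla^2 U|^2$ and the boundary identity for $\partial_\nu|\nabla U|^2$ so that the mean-curvature terms cancel, invoke Lemma \ref{lem:harmonic} together with the global Lipschitz bound, and close with the logarithmic cut-off to force $\nabla^2 U \equiv 0$ in $\{U>0\}$. The extra details you supply (the rescaling argument for the Lipschitz bound and the connectedness argument showing each component is a full half-plane) are correct and fill in steps the paper leaves implicit.
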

\begin{proof} For any $\eta \in C^{\infty}_0(\R^n)$ we plug in the test function $\phi = |\nabla U|\eta$ in the stability inequality, obtaining
\[
\int_F H \eta^2 \, d\m \leq \int_\O \left(|\nabla |\nabla U||^2 \eta^2 + |\nabla U|^2 |\nabla \eta|^2 + \frac{1}{2}\nabla |\nabla U|^2 \cdot \nabla \eta^2\right)\, dx.
\]
Since $\frac{1}{2}(|\nabla U|^2)_\nu = H$ on the free boundary $F$ while $\D |\nabla U|^2 = 2 |\nabla ^2 U|^2$ in the positive phase $\O=\{U>0\}$, an integration by parts yields
\[
\int_{\O} \left(|\nabla^2 U|^2  - |\nabla |\nabla U||^2\right)\eta^2 \, dx \leq \int_\O |\nabla U|^2 |\nabla \eta |^2 \, dx.
\] 
Combining the fact \eqref{eq:Lip} that $U$ is Lipschitz continuous (see also Proposition \ref{prop:gradient}),
\[
|\nabla U|\leq C \quad \text{in } \R^2,
\]
with the result of Proposition \ref{lem:harmonic}, gives us the estimate on the Hessian 
\begin{equation}\label{prop:stab:main}
\int_\O |\nabla^2 U|^2 \eta^2 \, dx \leq C \int_{\O} |\nabla \eta|^2 \, dx. 
\end{equation}
By plugging in \eqref{prop:stab:main} a standard \emph{logarithmic cut-off} function $\eta=\eta_R$
\[
\eta_R(x):= \begin{cases} 1 & \text{for } |x|\leq R, \\ 
2-(\log|x|)/\log R & \text{for } R<|x|\leq R^2, \\
0 & \text{for } |x|>R^2,
\end{cases}
\]
we now obtain
\[
\int_{\O \cap B_R} |\nabla^2 U|^2 \, dx \leq \frac{C}{\log^2 R} \int_{\O\cap (B_{R^2}\setminus B_R)} \frac{1}{|x|^2} \, dx  \leq \frac{C'}{\log R}.
\]
Taking $R\to \infty$, we conclude that $|\nabla^2 U|^2 = 0$ in $\O$ so that $u$ is a linear function in each connected component of $\O$. There are only two possibilities, up to Euclidean isometry: $U=x_2^+$, or $U=x_2^+ + (x_2 + a)^-$, for some $a>0$.
\end{proof}

We now borrow a strategy from minimal surface theory (see \cite[Chapter 3]{white2013lectures}) to prove that the rigidity of the Bernstein type problem for \eqref{eqn} in dimension $n$
entails interior bounds for the curvature of the free boundary in the local problem.

\begin{proof}[Proof of Theorem \ref{thm:bernstab}]
Assume that the statement of the theorem is false. Then there exists a sequence of counterexamples $\{u_k\}_{k\in \N}$ defined in $B_1$, whose free boundaries $F(u_k)$  have second fundamental forms $A_k$ that satisfy:
\[
\rho_k:=\max_{p\in F(u_k)\cap \overline{B_{1/4}}} |A_k(p)|\text{dist}(p, \de B_{1/4}) \nearrow \infty, \quad \text{as } k\to \infty.
\]
Let $p_k\in F(u_k)\cap B_{1/4}$ be a point where the maximum $\rho_k$ is attained and define the sequence of rescaled solutions
\[
\tilde{u}_k(x):= |A_k(p_k)| u_k (p_k + x /|A_k(p_k)|) \quad \text{for } x \in (B_{1/4} - p_k)|A_k(p_k)|\supseteq B_{\rho_k}, 
\]
which are uniformly Lipschitz continuous and uniformly nondegenerate in $B_{\rho_k}$, the latter on account of Theorem \ref{thm nondegeneracy}. 
Furthermore, the free boundary $F(\tilde{u}_k)$ of $\tilde{u}_k$ has second fundamental form $\tilde{A}_k$ that satisfies 
\begin{align}
|\tilde{A}_k(0)| =  1 & \quad \text{and} \label{eq:normalizedcurv} \\
|\tilde{A}_k(p)|\text{dist}(p, \de B_{\rho_k}) \leq \rho_k & \quad \text{for all } p\in F(\tilde{u}_k)\cap B_{\rho_k}. \notag
\end{align}
This means that for each fixed $r>0$
\begin{equation}\label{thm:bernstab:eqcurv}
\sup_{p\in F(\tilde{u}_k)\cap B_r} |\tilde{A}_k(p)| \leq \frac{\rho_k}{\rho_k - r} \to 1 \quad \text{as } k\to \infty. 
\end{equation}
Now, according to Proposition \ref{prop:limitofsolns}, up to taking a subsequence, $\{\tilde{u}_k\}$ converges uniformly on compacts to a globally defined \emph{viscosity} solution $U$, while both 
\begin{equation}\label{thm:bernstab:hausdorff}
F(\tilde{u}_k) \to F(U) \quad \text{and} \quad \overline{\{\tilde{u}_k>0\}} \to \overline{\{U>0\}}
\end{equation}
converge locally in the Hausdorff distance. Furthermore, by Proposition \ref{prop:gradient}, we know that
\begin{equation}\label{thm:bernstab:grad}
|\nabla U| \leq 1 \quad \text{in} \quad \{U>0\}.
\end{equation}
 We will argue that $U$ is, in fact, an entire \emph{classical stable} solution of \eqref{eqn}. 

Let $q\in F(U)$. We aim to show that near $q$ the free boundary $F(U)$ is the graph of a smooth function, separating positive from zero phase. Let $q_k\in F(\tilde{u}_k)$ be a sequence of points that converge to $q$ as $k\to\infty$, and let $\nu_k$ be the inner unit normal vector to $\{\tilde{u}_k>0\}$ at $q_k$. By possibly taking a further subsequence and rotating the coordinate axes, we may assume that $\nu_k$ converge to $e_n$. 
Now, because of the uniformly bounded (on compacts) curvature \eqref{thm:bernstab:eqcurv} of the free boundaries $F(\tilde{u}_k)$, there is a small absolute constant $0<c<1$ such that for all large $k$, the connected component $\mathcal{C}_k$ of $[B_c(q)]^+(\tilde{u}_k)$, whose boundary contains $q_k$, is a supergraph
\begin{equation}\label{thm:bernstab:eq:supergraph}
\mathcal{C}_k = \{y\in B_{c}(q): y=q+(x',x_n),~ x_n> f_k(x')\},
\end{equation}
of a function $f_k:B'_c\subset \R^{n-1}\to \R$ with uniformly bounded $C^2$ norm (Lemma \ref{lem:unifcurv}):
\begin{equation}\label{thm:bernstab:eq:AA}
\|f_k\|_{C^2(B'_{c})}\leq 1/c \quad \text{and} \quad \lim_{k\to\infty} f_k(0') = 0.
\end{equation}
By Arzela-Ascoli, the sequence $\{f_k\}$ subconverges in $C^{1,\a}(B_c')$ to some $f\in C^2(B_c')$ with $f(0')=0$, so that by \eqref{thm:bernstab:hausdorff}, the set
\[
\mathcal{C} :=  \{y\in B_{c}(q): y=q+(x',x_n),~ x_n> f(x')\},
\]
is a connected component of $[B_c(q)]^+(U)$. By the classical regularity theory for the one-phase FBP \cite{KindNiren}, $f$ is in fact smooth,
\begin{equation}\label{thm:bernstab:eq:conv}
f_k \to f \quad \text{in } C^m(B_c') \quad \text{for any } m\in \N,
\end{equation}
and  $U_+:=U\chrc{\mathcal{C}}$ is a classical solution of \eqref{eqn} in $B_c(q)$. In particular, when $q=0$ is the origin, the normalized curvature condition \eqref{eq:normalizedcurv} implies that $F(U_+)$ has second fundamental form $\tilde{A}$ at $0$ of unit magnitude:
\begin{equation}\label{thm:bernstab:eq:limcurv}
|\tilde{A}(0)|=1.
\end{equation}

To finish the proof that $U$ itself is a classical solution in $B_c(q)$, it remains to verify that there are no two connected components of $[B_c(q)]^+(U)$ touching at $q$. Assume, to the contrary, that there is a separate connected component $\tilde{\mathcal{C}}$ of $[B_c(q)]^+(U)$, such that $q\in \de\tilde{\mathcal{C}}$.  By the argument from the previous paragraph, $U_-:=U\chrc{\mathcal{\tilde{C}}}$ is a classical solution of \eqref{eqn} in $B_c(q)$, as well. Denote by $H_+$  and $H_-$ the mean curvature of $F(U_+)$ and $F(U_-)$ with respect to the inner unit normal $\nu_+$ to $\mathcal{C}$ and $\nu_-$ to $\tilde{\mathcal{C}}$, respectively. Observe that in at least one of $\mathcal{C}$ or $\tilde{\mathcal{C}}$ we have the strict inequality $|\nabla U_\pm|<1$, for otherwise, Proposition \ref{prop:gradient} would suggest that $U(x)=|(x-q)_n|$, contradicting the nontrivial curvature condition \eqref{thm:bernstab:eq:limcurv} at $0$. Let us therefore assume that 
\begin{equation}\label{thm:bernstab:assump}
|\nabla U_-|<1 \quad \text{in } \tilde{\mathcal{C}}. 
\end{equation}
Now, because of \eqref{thm:bernstab:grad}, we have
\begin{equation}\label{thm:bernstab:meancurv}
H_{\pm} = \text{div}\left(\frac{\nabla U_\pm}{|\nabla U_\pm|}\right) = - \frac{\de_{\nu_\pm}|\nabla U_\pm|^2}{2|\nabla U_\pm|^2} = -\frac{1}{2}\de_{\nu_\pm}|\nabla U_\pm|^2 \geq 0.
\end{equation}
Moreover, via the Hopf Lemma, \eqref{thm:bernstab:assump} entails that 
\begin{equation}\label{thm:bernstab:Hmin}
H_- >0.
\end{equation}
On the other hand, since $F(U_+)$ and $F(U_-)$ touch at $q$ with normal unit vectors $\nu_+(q)=e_n=-\nu_{-}(q)$, we have the comparison
\begin{equation}\label{thm:bernstab:eq:comp}
-H_-(q) \geq  H_+(q). 
\end{equation}
Combining \eqref{thm:bernstab:meancurv}, \eqref{thm:bernstab:Hmin} and \eqref{thm:bernstab:eq:comp} yields the impossible
\[
0>-H_-(q) \geq  H_+(q)\geq 0. 
\]

We conclude that $U$ is an entire classical solution of the one-phase FBP. By the smooth local convergence of the free boundaries \eqref{thm:bernstab:eq:conv}, we also see that for any fixed $\phi\in C^\infty_0(\R^n)$
\[
\int_{F(\tilde{u}_k)} H_k \phi^2 d\H^{n-1}  \to \int_{F(U)} H \phi^2 \, d\H^{n-1} \quad \text{as } k\to\infty,
\]
where $H_k$ and $H$ denote the mean curvature of $F(\tilde{u}_k)$ and $F(U)$, respectively. Since by Proposition \ref{prop:limitofsolns}, 
\[
\chrc{\{\tilde{u}_k>0\}} \to \chrc{\{U>0\}} \quad \text{in } L^1_\text{loc}, \quad \text{as } k\to\infty,
\]
we also have that
\[
\int_{\{\tilde{u}_k>0\}} |\nabla \phi|^2 \, dx \to \int_{\{U>0\}} |\nabla \phi|^2 \, dx , \quad \text{as } k\to\infty.
\]
By taking the limit as $k\to \infty$, we see then that the stability of $\tilde{u}_k$ entails the stability of $U$.  

However, the rigidity statement \eqref{eq:FBBern} now says that $F(U)$ is flat, which contradicts the fact \eqref{thm:bernstab:eq:limcurv} that $F(U)$ has nontrivial curvature at the origin.  The proof is complete.


\end{proof}

\appendix 
\section{Auxiliary results}
We recall the notion of a \emph{viscosity solution} of \eqref{eqn} (see \cite{CafSalsa}).  First we define viscosity super- and subsolutions.
\begin{defi}  \label{def:visc.super} A viscosity supersolution of \eqref{eqn} in a domain $D\subseteq \R^n$ is a non-negative
function $w\in C(D)$ such that $\D w \le 0$ in $D^+(w)$ and 
for every $x_0\in F(w)$ with a tangent ball $B$ from the positive side
($x_0\in \de B$ and $B\subset D^+(w)$), there is $\a \le 1$ such that
\[
u(x) = \a  \langle x-x_0, \nu\rangle^+ + o(|x-x_0|) 
\]
as $x\to x_0$ non-tangentially in $B$, with $\nu$ the inner normal to $\de B$ at $x_0$. 
\end{defi}

\begin{defi} \label{def:visc.sub}   A viscosity subsolution of \eqref{eqn} in a domain $D\subseteq \R^n$ is a non-negative
function  $w\in C(D)$  such that $\D w \ge 0$ in $D^+(w)$ and 
for every $x_0\in F(w)$ with a tangent ball $B$ in the zero set
($x_0\in \de B$ and $B\subset \{w=0\}$), there is $\a \ge 1$ such that
\[
u(x) = \a  \langle x-x_0, \nu\rangle^+ + o(|x-x_0|) 
\]
as $x\to x_0$ non-tangentially in $B^c$, with $\nu$ the outer normal to $\de B$ at $x_0$. 
\end{defi}

A {\em viscosity solution} in $D$ is a function that is both a supersolution and a subsolution in
the sense above. 

The class of viscosity solutions is particularly well suited for taking uniform limits. We quote the following result 

\begin{lem}(\cite[Lemma 4.4]{JK16})\label{lem:limitvisco} Let $u_k\in C(D)$ be a sequence of viscosity solutions of \eqref{eqn} in $D$ such that $u_k\rightarrow u$ uniformly and $u$ is Lipschitz continuous.
Then $u$ is also a viscosity supersolution of \eqref{eqn} in
$D$. If, in addition, $\overline{D^{+}(u_k)} \rightarrow
\overline{D^{+}(u)}$ locally in the Hausdorff distance, then
$u$ is a viscosity subsolution, as well.
\end{lem}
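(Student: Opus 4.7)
The plan is to verify Definitions \ref{def:visc.super} and \ref{def:visc.sub} for the limit $u$ by combining stability of harmonic functions under uniform convergence with standard comparison arguments; the Hausdorff hypothesis enters only the subsolution step, where it preserves the zero-set geometry in the limit. For any $x\in D^+(u)$, uniform convergence gives $u_k > u(x)/2 > 0$ on some neighborhood of $x$ for all large $k$, so $u_k$ is harmonic there; uniform limits of harmonic functions being harmonic, $\Delta u = 0$ in $D^+(u)$, which handles the Laplacian condition in both viscosity definitions.

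\textbf{Supersolution tangent-ball condition.} Fix $x_0\in F(u)$ with tangent ball $B_\rho(y)\subset D^+(u)$ and inner normal $\nu$. Since $u\geq 0$ is harmonic in $B_\rho(y)$ and vanishes at $x_0$, boundary regularity for harmonic functions in smooth domains yields the non-tangential expansion
\[
u(x) = \alpha\langle x - x_0, \nu\rangle^+ + o(|x - x_0|), \quad x\to x_0 \text{ in } B_\rho(y),
\]
with finite $\alpha\geq 0$ by the Lipschitz bound. To show $\alpha\leq 1$, suppose $\alpha > 1$, pick $\beta\in(1,\alpha)$, and use the classical strict subsolution $\phi = \beta\Phi_{z,R}$ of \eqref{eqn}, where $\Phi_{z,R}(x) = \frac{R}{n-2}[(R/|x-z|)^{n-2}-1]^+$ (logarithmic variant for $n=2$) is centered at $z = x_0 + R\nu$ with $R<\rho$, so that $B_R(z)\subset\overline{B_\rho(y)}$ is internally tangent to $\partial B_\rho(y)$ at $x_0$ and $|\nabla\phi| = \beta > 1$ on $\partial B_R(z)$. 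Using $\beta < \alpha$ in the expansion, $B_R(z)\subset D^+(u)$ (so $u>0$ on $\partial B_R(z)\setminus\{x_0\}$), and the maximum principle on $B_\eta(x_0)\cap B_R(z)$ for small $\eta > 0$, one obtains $\phi\leq u$ on $\overline{B_\eta(x_0)}$, with strict inequality on $\partial B_\eta(x_0)\setminus\{x_0\}$. Uniform convergence promotes this to $\phi < u_k$ on $\partial B_\eta(x_0)$ for all large $k$; sliding $\phi$ slightly in the $-\nu$ direction then yields a classical subsolution first touching $u_k$ from below at some $q_k\in F(u_k)\cap B_\eta(x_0)$, with slope still $\beta > 1$, contradicting the viscosity supersolution property of $u_k$.

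\textbf{Subsolution tangent-ball condition.} Assume the Hausdorff hypothesis, and let $x_0\in F(u)$ with tangent ball $B\subset\{u=0\}$ and outer normal $\nu$. Rather than attempting a direct barrier comparison (which is delicate in the ``crescent'' between $\partial B$ and the tangent plane, where both $u$ and any radial barrier vanish faster than linearly), we plan to transfer the subsolution property from the $u_k$ by compactness. For any $B'\Subset B$, Hausdorff convergence gives $B'\cap\overline{D^+(u_k)} = \emptyset$, hence $u_k\equiv 0$ on $B'$, for all large $k$. A standard ``largest interior tangent ball'' construction then produces points $z_k\in F(u_k)$ converging to $x_0$, each equipped with a tangent ball $B_k\subset\{u_k=0\}$ from the zero side at $z_k$, with radii and outer normals converging to those of $B$ and $\nu$. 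The viscosity subsolution property of $u_k$ at $z_k$ gives expansions $u_k(x) = \alpha_k\langle x - z_k, \nu_k\rangle^+ + o_k(|x - z_k|)$ with $\alpha_k\geq 1$; combining with uniform convergence, the uniform Lipschitz bound on $u_k$, and standard interior harmonic regularity of $u_k$ in $D^+(u_k)$ to control the $o_k$ remainders uniformly in $k$, one passes to the limit and recovers an expansion for $u$ at $x_0$ with slope $\alpha = \lim_k \alpha_k\geq 1$.

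\textbf{Main obstacle.} The core difficulty is the uniform control of the asymptotic expansion coefficients and remainders through the limiting process, so that the slope inequality $\alpha_k\geq 1$ passes cleanly to the limit and matches the pre-existing expansion for $u$; quantitative blow-up/Harnack estimates for the harmonic functions $u_k$ in $D^+(u_k)$ do the job but require careful bookkeeping. The Hausdorff convergence is indispensable for the subsolution step: without it, the tangent ball $B\subset\{u=0\}$ might intersect $D^+(u_k)$ for arbitrarily large $k$, destroying the zero-side tangent-ball structure for $u_k$ and leaving no way to invoke its subsolution property near $x_0$.
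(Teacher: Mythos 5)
The paper does not reprove this lemma; it simply cites \cite[Lemma 4.4]{JK16} (itself modeled on \cite[Lemma 1.21]{CafSalsa}), so I will compare your sketch against that standard argument. Your supersolution step follows the standard barrier comparison and is essentially right, modulo the barrier geometry: as written, $\Phi_{z,R}$ is singular at its center $z$, so the comparison must take place on an annulus, and one must rule out interior first touching by pitting the strict subharmonicity of the barrier against the harmonicity of $u_k$ in $\{u_k>0\}$. These are fillable details.

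The subsolution step is where a genuine gap appears. You propose to find zero-side tangent balls $B_k\subset\{u_k=0\}$ at $z_k\to x_0$, invoke the subsolution expansion of each $u_k$ at $z_k$ with slope $\alpha_k\geq 1$, and pass to the limit. You flag the control of the $o_k$ remainders as the ``main obstacle'' requiring ``careful bookkeeping,'' but this is not a bookkeeping issue: the remainder in the viscosity definition is purely qualitative, carries no modulus, and there is no a priori reason for it to be uniform in $k$ --- the free boundaries $F(u_k)$ may develop fine-scale oscillations near $z_k$ that make the linear expansion of $u_k$ accurate only on a neighborhood of $z_k$ shrinking with $k$, in which case the limiting slope need not be $\lim_k\alpha_k$. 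As it stands this step does not close.

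The standard proof avoids expansions entirely and runs the subsolution case as a mirror image of the supersolution case via a strict \emph{supersolution} barrier: assuming the expansion slope of $u$ at $x_0$ satisfies $\alpha<1$, pick $\beta\in(\alpha,1)$ and build a radial $\psi=\beta\Psi$ that vanishes on a ball $B_R(z)\subset B$ tangent at $x_0$, is superharmonic and positive on the adjacent annulus, and has $|\nabla\psi|=\beta<1$ on $\partial B_R(z)$. Because $\alpha<\beta$, $\psi$ touches $u$ from above at $x_0$ with room to spare near $\partial B_\eta(x_0)$. The Hausdorff hypothesis guarantees $u_k\equiv 0$ on a slightly smaller ball inside $B_R(z)$ for large $k$, so that when $\psi$ is slid towards $x_0$ and compared with $u_k$ on $B_\eta(x_0)$ the first contact is forced onto $F(u_k)$, contradicting the subsolution property of $u_k$. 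Thus the role of the Hausdorff convergence is exactly what you identified --- it provides zero set for $u_k$ near $x_0$ --- but the mechanism is barrier comparison, not the transfer of tangent-ball structure and expansion coefficients to the limit.
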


As a corollary, we have the following well known result (see \cite[Lemma 1.21]{CafSalsa} or \cite[Proposition 4.2]{JK16} for the proof) describing uniform limits of Lipschitz continuous and non-degenerate viscosity solutions. 
\begin{prop}\label{prop:limitofsolns} Let $D\subseteq \R^n$ be a domain and $\{u_k\} \subset C(D)$ be a sequence of viscosity solutions of \eqref{eqn} in $D$ which satisfies
\begin{itemize}
\item (Uniform Lipschitz continuity) There exists a constant C, such that $$\|\nabla u_k\|_{L^{\infty}(D)} \leq C;$$
\item (Uniform non-degeneracy) There exists a constant c, such that $$\dashint_{\de B_r(x)} u_k \, d\H^{n-1} \geq cr$$ 
for every $B_r(x) \subseteq
D$, centered at a free boundary point $x\in F(u_k)$.
\end{itemize}
Then any limit $u \in C(\overline{D})$ of a uniformly
convergent on compacts subsequence $u_k \to u$ satisfies
\begin{enumerate}
\item $\overline{D^+(u_k)} \rightarrow \overline{D^+(u)}$ and $F(u_k) \rightarrow F(u)$ locally in the Hausdorff distance;
\item $1_{\{u_k>0\}} \rightarrow 1_{\{u>0\}}$ in $L^1_{\text{loc}}(D)$;
\end{enumerate}
Moreover, $u$ is a Lipschitz continuous, non-degenerate viscosity
solution of \eqref{eqn}.
\end{prop}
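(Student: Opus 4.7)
The plan is to argue in four steps: (i) Lipschitz continuity of $u$; (ii) local Hausdorff convergence $\overline{D^+(u_k)}\to \overline{D^+(u)}$ and $F(u_k)\to F(u)$; (iii) $L^1_{\mathrm{loc}}$ convergence of the indicator functions; and (iv) the viscosity-solution property and non-degeneracy of $u$.

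\emph{Step (i)} is immediate from the uniform Lipschitz bound and the locally uniform convergence. For \emph{step (ii)}, the inclusion $\overline{D^+(u)}\subseteq \liminf_k \overline{D^+(u_k)}$ uses only uniform convergence: any $x\in \overline{D^+(u)}$ is a limit of points $y$ with $u(y)>0$, and then $u_k(y)\to u(y)>0$ forces $y\in D^+(u_k)$ for large $k$, yielding the approximating sequence after a diagonal extraction. The reverse inclusion $\limsup_k \overline{D^+(u_k)} \subseteq \overline{D^+(u)}$ is the heart of the matter and requires non-degeneracy. I would argue by contradiction: suppose $x_k\in \overline{D^+(u_k)}$ converges to $x$ with $u\equiv 0$ on some ball $B_{2\rho}(x)\Subset D$; pick $y_k\in D^+(u_k)$ close to $x_k$, so that $y_k\to x$ and $u_k(y_k)\to 0$, and set $d_k:=\mathrm{dist}(y_k,F(u_k))$, with $\tilde z_k\in F(u_k)$ a nearest free-boundary point.

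In the sub-case $d_k\to 0$ along a subsequence, the points $\tilde z_k$ converge to $x$; applying non-degeneracy at $\tilde z_k$ for any fixed small $r<\rho$ produces a point of value $\ge cr$ inside $B_{2\rho}(x)$, contradicting $u_k\to 0$ uniformly there. In the sub-case $d_k\ge \delta>0$, I would invoke an \emph{interior non-degeneracy} bound
\[
u_k(y) \geq c'\, \mathrm{dist}(y,F(u_k))
\]
for $y\in D^+(u_k)$ well inside $D$, with $c'>0$ universal. This is deduced from boundary non-degeneracy, the Lipschitz bound, and a Harnack-chain argument: non-degeneracy at the nearest $\tilde z$ places a point $q$ with $u_k(q)\ge c\,\mathrm{dist}(y,F(u_k))$, and Lipschitzness pushes $q$ uniformly inside $D^+(u_k)$; a Harnack chain of dimensionally bounded combinatorial length then transfers the lower bound to $y$. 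Applied to $y_k$, this gives $u_k(y_k)\ge c'\delta>0$, contradicting $u_k(y_k)\to 0$. Hausdorff convergence of $F(u_k)\to F(u)$ follows from that of the closed positive phases together with the analogous statement for the closed zero sets.

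With step (ii) in hand, \emph{step (iii)} follows because the indicators $\chrc{\{u_k>0\}}$ converge pointwise on the open complement $D\setminus F(u)$, and $F(u)$ has Lebesgue measure zero (the Lipschitz plus non-degeneracy bracketing forces $\mathcal{H}^{n-1}\lfloor_{F(u)}$ to be locally $\sigma$-finite), so dominated convergence applies. For \emph{step (iv)}, Lemma \ref{lem:limitvisco}, combined with the Hausdorff convergence of the closed positive phases established in step (ii), yields that $u$ is a viscosity solution; the non-degeneracy of $u$ at $x\in F(u)$ is obtained by approximating $x$ by $x_k\in F(u_k)$ via the Hausdorff convergence of the free boundaries and passing to the limit in the averaged inequality through the uniform convergence of $u_k$. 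The main obstacle is the interior non-degeneracy bound in step (ii): threading a Harnack chain of bounded length through $D^+(u_k)$ requires the uniform lower-density properties of $D^+(u_k)$ near $F(u_k)$ that are guaranteed by non-degeneracy, and verifying the uniform bound on the chain's length is where the proof is most delicate.
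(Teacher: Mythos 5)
The paper does not prove Proposition \ref{prop:limitofsolns} itself: it quotes it as a well-known compactness result and refers to \cite[Lemma 1.21]{CafSalsa} and \cite[Proposition 4.2]{JK16} for the proof. Your outline follows the standard template, and steps (i), (iv) and the easy inclusion $\overline{D^+(u)}\subseteq\liminf_k\overline{D^+(u_k)}$ are in order.

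The genuine gap is in the sub-case $d_k:=\mathrm{dist}(y_k,F(u_k))\geq\delta>0$ of step (ii). The pointwise interior non-degeneracy $u_k(y)\geq c'\,\mathrm{dist}(y,F(u_k))$ that you invoke does not follow from the stated hypotheses, and the ``Harnack chain of dimensionally bounded combinatorial length'' need not exist: averaged non-degeneracy at $\tilde z_k$ produces a point $q$ with $u_k(q)\gtrsim d_k$ and $|q-\tilde z_k|\lesssim d_k$, but $q$ and $y_k$ need not lie in the same connected component of $D^+(u_k)$, and even if they do, joining them by a chain of a \emph{bounded} number of Harnack balls requires an NTA-type connectivity of $D^+(u_k)$ that the Lipschitz bound and averaged non-degeneracy do not supply (lower Lebesgue density of $D^+(u_k)$ at free boundary points, which they do supply, is not the same thing). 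What \emph{is} available is the weaker $\sup$-form, $\sup_{B_r(y)}u_k\geq cr$ for $y\in \overline{D^+(u_k)}$, $r\geq \mathrm{dist}(y,F(u_k))$ and $B_{2r}(y)\Subset D$, obtained by applying the averaged non-degeneracy at $\tilde z_k$ with radius comparable to $r$; this handles the sub-case $\delta\leq d_k\leq \rho$ directly (the large value lands inside $B_{2\rho}(x)$ where $u_k\to 0$ uniformly). For $d_k\gg\rho$, so that a fixed ball sits inside $D^+(u_k)$ with no free boundary point nearby, you need an extra normalization such as $0\in F(u_k)$ to rule it out; this normalization is present in the cited references and in the paper's own application of the proposition. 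A smaller gap is in step (iii): the assertion that ``Lipschitz plus non-degeneracy bracketing forces $\mathcal{H}^{n-1}\lfloor_{F(u)}$ to be locally $\sigma$-finite'' is the right conclusion but is stated without argument; a cleaner route to $|F(u)|=0$ is to note that $\{u>0\}$ (by non-degeneracy of $u$) and $\{u=0\}$ (by Lipschitz continuity of $u$) both have uniform positive Lebesgue density at every point of $F(u)$, so no point of $F(u)$ is a density point of $F(u)$.
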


In the next proposition we establish the fact that globally defined viscosity solutions of the one-phase FBP have a gradient bounded by $1$.

\begin{prop}\label{prop:gradient} Let $u:\R^n\to [0,\infty)$ be an entire viscosity solution of the one-phase FBP \eqref{eqn}, with $F(u)\neq \emptyset$, and let $\O:=\{x\in \R^n: u(x)>0\}$ denote the positive phase of $u$. Then 
\[
|\nabla u(x)| \leq 1 \quad \text{for all } x\in \O.
\] 
Furthermore, if $|\nabla u(x_0)| = 1$ for some $x_0\in \O$, then $|\nabla u (x)|\equiv 1$ in the connected component $\mathcal{C}$ of $\O$, containing $x_0$, so that $u|_{\mathcal{C}}(x) = (x-p)\cdot e$ for some $p\in \R^n$ and unit vector $e\in \R^n$.
\end{prop}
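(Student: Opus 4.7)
The proof plan is to exploit the harmonicity of $u$ in its open positive phase $\Omega$. Since $u$ is smooth and harmonic there, the function $|\nabla u|^2$ satisfies $\Delta |\nabla u|^2 = 2|\nabla^2 u|^2 \geq 0$ and is subharmonic in $\Omega$. I would combine this subharmonicity with the viscosity supersolution condition (asymptotic slope $\leq 1$ at each free boundary point admitting a tangent ball from inside) to deduce via the maximum principle that $|\nabla u|^2 \leq 1$ on $\Omega$. Part (2) would then follow immediately from the strong maximum principle: if $|\nabla u(x_0)|=1$ at an interior point $x_0$ of a connected component $\mathcal{C}\subseteq \Omega$, then the subharmonic $|\nabla u|^2 \leq 1$ attains its maximum at an interior point of $\mathcal{C}$, forcing $|\nabla u|^2 \equiv 1$ on $\mathcal{C}$, so that $\nabla^2 u \equiv 0$ there and $u|_\mathcal{C}(x)=(x-p)\cdot e$ with $|e|=1$ and $p\in\partial \mathcal{C}$.

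For Part (1), I would argue by contradiction, setting $M := \sup_\Omega |\nabla u|^2$, which is finite by the global Lipschitz bound on $u$. Supposing $M>1$, pick $x_k \in \Omega$ with $|\nabla u(x_k)|^2 \to M$. If a subsequence converges to some $x_* \in \Omega$, then $|\nabla u|^2$ attains the maximum $M$ in the interior of its component $\mathcal{C}$; the strong maximum principle forces $|\nabla u|^2 \equiv M$ on $\mathcal{C}$, so $\nabla^2 u \equiv 0$ and $u|_\mathcal{C}$ is an affine function with slope $\sqrt{M}>1$. Since $u = 0$ on $\partial \mathcal{C}\subseteq F(u)$ (which is nonempty: otherwise $\mathcal{C}=\R^n$ and $F(u)=\emptyset$, contradicting the hypothesis), $\mathcal{C}$ must be a half-space, so every $x_0\in\partial\mathcal{C}$ admits a tangent ball from the inside, and the viscosity supersolution asymptotic at $x_0$ forces the slope $\sqrt{M}\leq 1$, a contradiction. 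In the remaining scenario $x_k \to \infty$, set $r_k := \mathrm{dist}(x_k, F(u))$ and apply a translation-and-rescaling maneuver: if $r_k$ stays bounded below, translate $u_k := u(x_k + \cdot)$; if $r_k \to 0$, rescale $\tilde{u}_k := r_k^{-1} u(y_k + r_k\, \cdot)$ with $y_k \in F(u)$ closest to $x_k$. The uniform Lipschitz bound and Arzela-Ascoli yield a subsequential limit $u_\infty$, which is an entire viscosity supersolution by Lemma \ref{lem:limitvisco}; interior $C^\infty$-estimates for harmonic functions (applied on a fixed ball sitting at positive distance from the free boundary of $u_k$ or $\tilde u_k$) carry the gradient information across the limit and produce an interior point of $\{u_\infty>0\}$ at which $|\nabla u_\infty|^2 = M$. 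This reduces matters to the interior-attainment case just handled, and the same contradiction closes the argument.

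The main obstacle I anticipate is precisely this unbounded-domain scenario: a bounded subharmonic function on an unbounded component of $\Omega$, even with the correct boundary values along $F(u)$, need not satisfy a Phragm\'en--Lindel\"of-style maximum principle, so one cannot directly pass from the boundary bound at $F(u)$ to the interior bound. The translation-and-rescaling compactness argument, enabled by the global Lipschitz bound and Lemma \ref{lem:limitvisco}, is designed precisely to bypass this obstacle by reducing to an interior-attainment case in a limiting viscosity supersolution, where the strong maximum principle applies cleanly.
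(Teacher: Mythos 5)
Your approach matches the paper's in essence: exploit the subharmonicity $\Delta|\nabla u|^2 = 2|\nabla^2 u|^2 \geq 0$ in $\Omega$, reduce to interior attainment of the supremum $L=\sup_\Omega|\nabla u|$ via translation/blow-up compactness and Lemma~\ref{lem:limitvisco}, then deduce via the strong maximum principle that the limiting positive phase is a half-space carrying an affine solution of slope $L$, and invoke the viscosity supersolution asymptotics to force $L\leq 1$. Part~(2) is then the same strong-maximum-principle argument, as you say.

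There is, however, a genuine gap: you assume the finiteness of $M=\sup_\Omega|\nabla u|^2$ comes ``by the global Lipschitz bound on $u$''. For an entire \emph{viscosity} solution this is not a hypothesis and is not free; the paper devotes its opening step to proving it. Concretely: fix $x\in\Omega$, recenter and rescale so $x=0$ and $d(0,F(u))=1$; by Harnack $u\geq c\,u(0)$ on $B_{1/2}$; compare $u$ from below in the annulus $B_1\setminus B_{1/2}$ with the harmonic barrier $h$ equal to $cu(0)$ on $\partial B_{1/2}$ and $0$ on $\partial B_1$; at a touching point $p\in F(u)\cap\partial B_1$, the Hopf lemma together with the viscosity \emph{supersolution} expansion gives $1\geq u_\nu(p)\geq h_\nu(p)=c_0\,u(0)$, so $u(0)\leq C_0$, and Harnack plus interior gradient estimates then yield $|\nabla u(0)|\leq C$. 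Without this step your maximizing sequence $x_k$ is not even known to exist. A lesser imprecision: your dichotomy for $x_k\to\infty$ ($r_k$ bounded below vs.\ $r_k\to 0$) is not exhaustive -- one must also treat $r_k\to\infty$ (where the translated limit has empty free boundary and a Liouville-type argument is needed), and note that $x_k$ bounded with $r_k\to 0$ is possible and belongs with the rescaling case, not with $x_k\to\infty$. The paper sidesteps this bookkeeping by always rescaling at the natural scale $d_k=d(x_k,F(u))$, i.e.\ $v_k(x):=d_k^{-1}u(x_k+d_k x)$, which gives $d(0,F(v_k))=1$ and $|\nabla v_k(0)|\to L$ in a single stroke; adopting this unified blow-up would streamline your proof.
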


\begin{proof}
We sketch out the argument from \cite[Proposition 2.1]{KWang-MorseIndex-FB}, written for a closely related problem. 

First, we show that there is a dimensional constant $C>0$ such that $$|\nabla u|\leq C \quad \text{in } \O.$$ Pick any $x\in \O$. By rescaling and recentering, we may assume that $x=0$ and $d(x, F(u))=1$. By applying the Harnack inequality plus gradient estimates to $u$ in $B_1$, it suffices to show that $u(0)\leq C_0$. Let $p\in F(u)\cap \de B_1$ and note that $B_1$ is a ball touching $F(u)$ from the positive side. 

By the Harnack inequality, we know that $u\geq cu(0)$ in $B_{1/2}$. Consider the harmonic function $h$ in the annulus $B_1\setminus B_{1/2}$, having boundary values $h=cu(0)$ on $\de B_{1/2}$ and $h=0$ on $\de B_1$. Then $h_\nu(p) = c_0 u(0)$, where $\nu$ denotes the inner unit normal to $\de B$ at $p$. On the other hand, the maximum principle implies $h\leq u$ in $B_1\setminus B_{1/2}$, so that the Hopf Lemma in conjunction with the viscosity supersolution property yield the desired bound
\[
1 \geq u_\nu(p) \geq h_\nu(p) = c_0 u(0).
\] 

Therefore, the supremum $L:=\sup_{\O} |\nabla u|\leq C$ is positive and finite. Let $x_k\in \O$ be a sequence of points such that $|\nabla u(x_k)| \to L$ as $k\to\infty$.  Define the folowing rescales of $u$
\[
v_k(x):=d_k^{-1} u(x_k + d_k x), \quad \text{where } d_k:=d(x_k, F(u)). 
\]
Then the $v_k$ satisfy: $\|\nabla v_k\|\leq L$, the distance $d(0,F(v_k))=1$ and $|\nabla v_k(0)|=|\nabla u(x_k)| \to L$ as $k\to \infty$. Thus, the sequence of uniformly Lipschitz continuous, viscosity solutions $\{v_k\}$ subconverges uniformly on compact subsets of $\R^n$ to the globally defined, Lipschitz continuous function $v$, which is a harmonic in its positive phase $\tilde{\O}:=\{v>0\}$, satisfies $\|\nabla v\|_{L^\infty}\leq L$, and which is a viscosity \emph{supersolution} on account of Lemma \ref{lem:limitvisco}. Furthermore, since $v_k\to v$ uniformly on $\overline{B_1}$ and $v_k-v$ is harmonic in $B_1$, we have
\[
|\nabla v(0) - \nabla v_k(0)| \leq  C  \|v_k - v\|_{C(B_1)} \to 0 \quad \text{as } k\to\infty, 
\] 
so that $|\nabla v(0)|=L$. 

However, since $\D |\nabla v|^2 =2|\nabla^2 v|^2 \geq 0$ in $\tilde{\O}$, we know that $v$ is subharmonic in $\tilde{\O}$, so that the strict maximum principle implies that $|\nabla v|\equiv L$ in the connected component $\mathcal{C}$ of  $\tilde{\O}$ containing $0$. Therefore, $2|\nabla^2 v|^2 = \D |\nabla v|^2 = 0$ in $\mathcal{C}$, so that $v$ is a linear function in $\mathcal{C}$ with slope $L>0$. This means that the component $\mathcal{C}$ is actually a half-space and for some $p\in \R^n$, and unit vector $e\in \R^n$, we have
\[
v(x) = L(x-p)\cdot e \quad \text{for all } x\in \mathcal{C}.
\]
We now infer that $L\leq 1$ from the fact that $v$ is a viscosity supersolution to \eqref{eqn}.

To establish the remaining part of the proposition (regarding the possibility of $|\nabla u(x_0)|=1$), we use the verbatim argument from the paragraph above. 
\end{proof}

We end the appendix with the following lemma.
\begin{lem}\label{lem:unifcurv} Let $u$ be a classical solution to \eqref{eqn} in $B_1\subset \R^n$ which is Lipschitz continuous and nondegenerate (with universal constants) in $B_1$. Assume that $0\in F(u)$ and that the second fundamental form $A$ of $F(u)$ is bounded
\[
|A(p)| \leq C \quad \text{for all } p\in F(u),
\]
by some absolute constant $C>0$. Then there exists a constant $c\in (0,1)$ such that, in a suitable Euclidean coordinate system, the connected component $\mathcal{C}$ of $B_c^+(u)$, whose boundary contains $0$, is the supergraph:
\begin{equation}\label{lem:unifcurv:supergraph}
\mathcal{C}= \{x=(x',x_n)\in B_{c}: x_n> f(x')\},
\end{equation}
for some $f:B_{c}'\to \R$ with $\|f\|_{C^2(B_c')}\leq 1/c$.
\end{lem}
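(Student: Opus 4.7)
The plan is to use the curvature bound to write the component of $F(u)$ through $0$ as a $C^2$ graph with universally bounded norm, and then to use the linear growth of $u$ away from this graph (coming from $|\nabla u|=1$ on $F(u)$ together with classical interior regularity) to identify $\mathcal{C}$ with the supergraph of this graph in a universally small ball.

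First, I would orient coordinates so that the inner unit normal to $\{u>0\}$ at $0$ is $e_n$. The bound $|A(p)|\leq C$ on $F(u)$ makes the Gauss map along the connected component $\Sigma_0$ of $F(u)$ through $0$ Lipschitz (in intrinsic distance) with constant $C$, so for some $c_1=c_1(C)$, every $p\in \Sigma_0\cap B_{c_1}$ satisfies $|\nu(p)-e_n|\leq 1/2$. The implicit function theorem then gives $\Sigma_0\cap B_{c_1}=\{x_n=f(x')\}$ with $f:B_{c_1}'\to\R$ satisfying $f(0')=0$, $\nabla f(0')=0$, $\|\nabla f\|_{L^\infty}\leq 1/2$, and, via $|D^2 f|=|A|(1+|\nabla f|^2)^{3/2}\leq 2^{3/2}C$, a universal $C^2$ bound; shrinking $c_1$ yields $\|f\|_{C^2(B_{c_1}')}\leq 1/c_1$. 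Next, classical Kinderlehrer--Nirenberg regularity gives $u\in C^{2,\alpha}$ up to $\Sigma_0$ near $0$ with norms controlled by $\|f\|_{C^2}$, hence ultimately by $C$. Since $u=0$ and $|\nabla u|=1$ on $\Sigma_0$, the Taylor expansion at $p=(x',f(x'))\in \Sigma_0\cap B_{c_1/2}$ takes the form
\[
u(x',x_n)=\frac{x_n-f(x')}{\sqrt{1+|\nabla f(x')|^2}}+O\bigl((x_n-f(x'))^2\bigr),
\]
with universal $O$-constant. Consequently there is a universal $\delta_0=\delta_0(C)>0$ such that $u>0$ on $\{x\in B_{c_1/2}: 0<x_n-f(x')<\delta_0\}$.

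Set $c:=\min\{c_1/2,\delta_0/2\}$. For any $x\in B_c$ with $x_n>f(x')$ one has $x_n-f(x')\leq 2c\leq \delta_0$, so $u(x)>0$. Thus the supergraph $S:=\{x\in B_c:x_n>f(x')\}$ is open, connected, and contained in $B_c^+(u)$; since $0\in\partial S$, $S$ lies in the component $\mathcal{C}$. Conversely, any $x\in \mathcal{C}$ with $x_n\leq f(x')$ could be joined inside $\mathcal{C}\subseteq B_c\setminus F(u)$ to a point of $S\subseteq \mathcal{C}$, but such a path must cross $\Sigma_0\subseteq F(u)$ --- a contradiction. Hence $\mathcal{C}=S$. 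The main obstacle will be the second step: securing the universal lower bound $\delta_0$ on the one-sided tubular thickness of $\Sigma_0$ inside $\{u>0\}$, which requires a quantitative Kinderlehrer--Nirenberg-type estimate depending only on $\|f\|_{C^2}$ (and thus only on $C$). Once $\delta_0$ is in hand, the identification $\mathcal{C}=S$ is a purely topological consequence of connectedness and the impossibility of crossing $F(u)$ within $\mathcal{C}$.
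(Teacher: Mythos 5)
Your Step 1 (graph representation of the component $\Sigma_0$ of $F(u)$ through $0$ with universal $C^2$ bounds) is fine and matches the paper. The problem is Step 2, which you yourself flag as ``the main obstacle'': the claim that a universal tubular thickness $\delta_0$ follows from ``a quantitative Kinderlehrer--Nirenberg-type estimate.'' As written, this is circular. The Kinderlehrer--Nirenberg estimates (via partial hodograph/Schauder) are derived \emph{inside} $\{u>0\}$ near $\Sigma_0$, so the scale at which they are valid --- and hence the $O$-constant in your Taylor expansion
\[
u(x',x_n)=\frac{x_n-f(x')}{\sqrt{1+|\nabla f(x')|^2}}+O\bigl((x_n-f(x'))^2\bigr)
\]
--- is itself limited by the one-sided thickness of $\{u>0\}$ along $\Sigma_0$. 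A priori, $\partial\mathcal{C}$ could contain another component of $F(u)$ arbitrarily close to $\Sigma_0$; in that case the expansion only holds up to that smaller distance and gives no universal $\delta_0$. Controlling $\|f\|_{C^2}$ alone does not control how far into the positive phase the $C^{2,\alpha}$ estimates on $u$ extend, so the assertion ``norms controlled by $\|f\|_{C^2}$'' does not yield what you need. Put differently, the universal thickness you want to derive is exactly what you would need to assume in order to apply the interior-plus-boundary Schauder estimates uniformly.

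A telling symptom: your argument never uses the nondegeneracy hypothesis in the lemma, whereas the paper's proof uses it crucially. The paper argues by contradiction: if for a sequence $c_k\to 0$ the component $\mathcal{C}_k$ bordering $0$ has a second free boundary component $\tilde F_k$ at distance $d_k\le c_k$ from $F_k$, rescale by $d_k^{-1}$ so the two components sit at unit distance while the rescaled curvature $\le c_k C\to 0$. Then Proposition~\ref{prop:limitofsolns} --- which requires \emph{both} uniform Lipschitz continuity and uniform nondegeneracy to guarantee Hausdorff convergence of $\overline{D^+}$ and $F$ and a nontrivial limit --- forces the limit to be $v=x_n^+$. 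But $F(v)=\{x_n=0\}$ has no second component at unit distance, a contradiction. This compactness step is precisely what your $\delta_0$ is supposed to encode, and it is where nondegeneracy does the work. To repair your argument, you would need to either invoke this compactness step (reproducing the paper's proof) or produce a direct quantitative estimate ruling out a nearby second free-boundary component --- which is not supplied by the regularity theory alone.
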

\begin{proof}
Since the curvature of $F(u)$ is bounded by an absolute constant, there is a small absolute constant $c>0$, such that the component $F$ of $F(u)\cap B_{c}$ containing $0$, is given by a graph 
\[
F = \{(x',x_n)\in B_{c}: x_n= f(x')\},
\]
with $\|f\|_{C^2(B_c')}\leq 1/c$. Let us show that, by possibly reducing the constant $c$, the connected component $\mathcal{C}$ of $B_c^+(u)$ bordering the origin, is the supergraph \eqref{lem:unifcurv:supergraph}. 

Assume that this last statement is false. Then there exist a sequence $c_k\to 0$ and a sequence of counterexamples $\{u_k\}$ such that the component  $\mathcal{C}_k$ of $B_{c_k}^+(u_k)$ bordering $0$ has at least two free boundary connected components: the connected component $F_k$ of $0$ in $F(u_k)\cap B_{c_k}$, and another component $\tilde{F}_k$. Choose $\tilde{F_k}$ to be the closest such component to $F_k$ and set $d_k:=\text{dist}(F_k,\tilde{F}_k)\leq c_k$. 
Consider now the rescaled solutions
\[
v_k(x):=d_k^{-1} [u_k\mathcal{C}_k](d_k x) \quad \text{in } B_{d_k^{-1}c_k}\supseteq B_1,
\]
and denote by $G_k:=d_k^{-1}F_k$, $\tilde{G}_k:=d_k^{-1}\tilde{F}_k$. 
We have that
\begin{equation}\label{eq:othercomp}
\text{dist}(G_k, \tilde{G}_k) = 1, 
\end{equation}
while the curvature $$|A_{G_k}|\leq d_k \sup |A_{F(u_k)}| \leq c_k C \to 0.$$ Now, according to Proposition \ref{prop:limitofsolns} the uniformly Lipschitz continuous and nondegenerate $v_k$ converge to the limit $v=x_n^+$ uniformly on compacts, with $G_k$ converging to a subset $G\subseteq\{x_n=0\}$. On the other hand, by the Hausdorff convergence of the free boundaries, \eqref{eq:othercomp} suggests that $F(v)$ also has a component sitting at a unit distance  away from $G$. This yields a contradiction and the proof of the lemma is complete. \qedhere

\end{proof}

\bibliography{Nondeg_bib}
\end{document}